
\documentclass[letterpaper,12pt]{amsart}

\textwidth=16.00cm 
\textheight=22.00cm 
\topmargin=0.00cm
\oddsidemargin=0.00cm 
\evensidemargin=0.00cm 
\headheight=0cm 
\headsep=0.5cm

\usepackage{amssymb,soul}
\usepackage{bm}
\usepackage{graphicx}
\usepackage{enumerate}
\usepackage{psfrag}
\usepackage{color}
\usepackage{hyperref}

\newcommand{\excise}[1]{}

\newtheorem{thm}{Theorem}[section]

\newtheorem{cor}[thm]{Corollary}
\newtheorem{prop}[thm]{Proposition}

\theoremstyle{definition}
\newtheorem{example}[thm]{Example}

\newtheorem{defn}[thm]{Definition}

\newtheorem{obs}[thm]{Observation}

\numberwithin{equation}{section}



\newcommand{\ring}[1]{\ensuremath{\mathbb{#1}}}


\newcommand\RR{\ring{R}}

\newcommand\ZZ{\ring{Z}}






\renewcommand\implies{\Rightarrow}


 %



\newcommand{\bfd}{\mathbf{d}}
\newcommand{\bfe}{\mathbf{e}}

\newcommand{\bfi}{\mathbf{i}}
\newcommand{\bfj}{\mathbf{j}}

\newcommand{\cala}{\mathcal{A}}

\newcommand{\calc}{\mathcal{C}}
\newcommand{\cald}{\mathcal{D}}

\DeclareMathOperator{\rank}{rank}
\DeclareMathOperator\link{link}
\DeclareMathOperator\ground{ground}
\DeclareMathOperator\facet{facet}
\DeclareMathOperator\face{face}
\newcommand{\triple}[3]{\begin{smallmatrix}#1 \\ #2 \\ #3\end{smallmatrix}}

\usepackage{tikz}
\usetikzlibrary{decorations.markings}
\tikzstyle{vertex}=[circle, draw, inner sep=0pt, minimum size=6pt,fill=black]
\tikzstyle{ghost}=[circle, draw, inner sep=0pt, minimum size=6pt]
\newcommand{\vertex}{\node[vertex]}
\newcommand{\ghost}{\node[ghost]}

\DeclareMathOperator{\cone}{\textnormal{cone}}




\begin{document}

\mbox{}
\title[Unimodular hierarchical models and their Graver bases]{Unimodular hierarchical models\\ and their Graver bases\qquad}

\author{Daniel Irving Bernstein}
\address{Mathematics\\North Carolina State University\\Box 8205\\Raleigh, NC 27695}
\email{dibernst@ncsu.edu}

\author{Christopher O'Neill}
\address{Mathematics\\University of California, Davis\\One Shields Ave\\Davis, CA 95616}
\email{coneill@math.ucdavis.edu}

\date{\today}

\begin{abstract}
	Given a simplicial complex whose vertices are labeled with positive integers,
	one can associate a vector configuration whose corresponding toric variety
	is the Zariski closure of a hierarchical model.
	We classify all the vertex-weighted simplicial complexes that give rise to
	unimodular vector configurations.
	We also provide a combinatorial characterization of their Graver bases.
\end{abstract}
\maketitle



\section{Introduction}\label{s:intro}

A \emph{hierarchical model} consists of all joint probability distributions
on discrete random variables $X_1, \dots, X_n$
where each $X_i$ has $\bfd_i$ states, and the $X_i$s satisfy certain
interactions specified by a simplicial complex $\calc$.
Geometrically, a hierarchical model can be viewed as the intersection 
of the probability simplex with a toric variety whose monomial 
parametrization matrix $\cala_{\calc,\bfd}$ is determined by 
a simplicial complex $\calc$ with ground set~$[n]$
and an integer vector $\bfd \in \ZZ_{\ge 2}^n$.  
Since any hierarchical model is a discrete exponential family
with design matrix $\cala_{\calc,\bfd}$, 
it is important to study properties of 
the lattice $\ker_{\ZZ} \cala_{\calc,\bfd}$.
For example, the goodness of fit of a hierarchical model 
specified by $(\calc,\bfd)$ can be tested using a Markov basis 
of this lattice \cite{diaconis-sturmfels1998}.

We are interested in properties of the matrix $\cala_{\calc,\bfd}$ 
that are useful for computations in algebraic statistics.
Perhaps the strongest property studied in this context is unimodularity, 
and this property is desirable for a number of reasons.
When $\cala_{\calc,\bfd}$ is unimodular,
it is easy to solve the integer programs that arise
when evaluating whether individual entries
of a data table are secure \cite{sullivant2006} or when doing 
sequential importance sampling \cite{chen2006sequential}.
Additionally, Graver bases and Markov bases can be easily computed 
for unimodular matrices, and efficient generation of 
Markov basis elements is an important problem in algebraic statistics.
For more about Markov bases and their uses, see the books 
\cite{aoki2012markov,drton2008lectures,sturmfels}.

It is also worth noting that when $\cala_{\calc,\bfd}$ is unimodular, 
the semigroup spanned by its columns is saturated 
i.e.\ $\cala_{\calc,\bfd}$ is \emph{normal}.
Normality was identified by Rauh and Sullivant as 
a key property of hierarchical models for applying
the toric fiber product to construct Markov bases. 
There has been much recent work aiming to understand normality of $\cala_{\calc,\bfd}$.
Bruns, Hemmecke, Hibi, Ichim, K\"oppe, Ohsugi, and S\"oger classified which values of $\bfd$ give rise to
a normal $\cala_{\calc,\bfd}$ when $\calc$ is the boundary of a simplex \cite{Bruns2011,ohsugi-hibi2007}.
For \emph{binary} heirarchical models (that is, when $\bfd = \bf 2$),
Sullivant showed that when $\calc$ is a graph,
$\cala_{\calc,\bfd}$ is normal if and only if 
$\calc$ is free of $K_4$ minors \cite{sullivant2010}.
Sullivant and the first author also classified all binary complexes $\calc$
on up to six vertices for which $\cala_{\calc,\bfd}$ 
is normal~\cite{normalbinaryhierarchical}.
Although several conditions ensuring the normality of $\cala_{\calc,\bfd}$
have been identified, a complete characterization still seems out of reach.  

The main contributions of this paper are twofold.  
First, we classify all unimodular hierarchical models (Theorem~\ref{t:completeunimodular}), extending the existing classification for unimodular binary hierarchical models \cite{unimodularbinaryhierarchical}.  
Second, we give a combinatorial description of the Graver
basis of any unimodular hierarchical model (Corollary~\ref{c:allgraverbases}), 
one that can be leveraged to obtain an algorithm for efficiently 
generating random Graver basis elements.
In developing these results, we identify a new matrix operation which preserves unimodularity (Proposition~\ref{p:lawrenceghostunimodular}).
Similar operations, such as Lawrence lifts, have played a crutial role in identifying pairs $(\calc,\bfd)$ that yield a normal or unimodular matrix $\cala_{\calc,\bfd}$.  

\section{Background}\label{s:background}

A simplicial complex $\calc$ is a pair $(V,\mathcal{F})$
where $V$ is a finite set and $\mathcal{F}$ is a set of subsets of $V$
such that if $G \subset F$ and $F \in \mathcal{F}$,
then $G \in \mathcal{F}$.
The set $V$ is called the \emph{ground set} of $\calc$ and
each $F \in \mathcal{F}$ is called a \emph{face} of $\calc$.
Inclusion-wise maximal faces of $\calc$ are called \emph{facets}.
We will use the notation
$\ground(\calc)$, $\face(\calc)$ and $\facet(\calc)$ to 
denote the ground set, faces and facets of a simplicial complex $\calc$, respectively.

An \emph{HM pair} is a pair $(\calc,\bfd)$ where
$\calc$ is a simplicial complex on some ground set $V$ 
and $\bfd \in \mathbb{Z}_{\ge 2}^V$ is an integer weighting of $V$.  
Associated to each HM pair with ground set $[n]$
is a statistical model (i.e. a subset of a probability simplex)
called a \emph{hierarchical model}.
It consists of the joint probability distributions
on discrete random variables $X_1,\dots,X_n$ where each $X_i$ has $d_i$ states
and the joint probabilities satisfy various relationships 
specified by $\calc$.
These relationships are given by polynomials which generate a toric ideal.
The matrix defining the monomial parameterization of this toric ideal is denoted $\cala_{\calc,\bfd}$, whose construction we now describe.

Fix an HM pair $(\calc,\bfd)$ with ground set $[n]$.
Define $\bfd_F = [d_{i_1}-1]\times \dots \times [d_{i_k}-1]$ for each nonempty face $F = \{i_1,\dots,i_{k}\}$ of~$C$, and define $\bfd_\emptyset = \{1\}$.
Write $\RR^{\bfd_{F}}$ for the vector space with coordinates 
indexed by $\bfj \in \bfd_F$ (whose coordinates are in turn 
indexed by the vertices of $F$).
For $\bfi \in [d_1] \times \dots \times[d_n]$, define 
$a^\bfi \in \bigoplus_{F \in \face(C)} \RR^{\bfd_{F}}$
such that 
\[
    a^{\bfi}_{F,\,\bfj} = \left\{\begin{array}{ll}
    1 & \text{whenever } F = \emptyset \text{ or } \bfi_k = \bfj_k \text{ for each } k \in F \\
    0 & \text{otherwise}
    \end{array}\right.
\]
and let $\cala_{\calc,\bfd}$ denote the matrix with columns $a^{\bfi}$ as $\bfi$ ranges over $[d_1] \times \dots \times[d_n]$.

\begin{example}\label{e:hmpair}
    The simplicial complex $\calc$ with ground set $[3]$ and facets $\{1,2\}$ and $\{2,3\}$ has faces $\emptyset, \{1\}, \{2\}, \{3\}, \{1,2\}, \{2,3\}$.
    The matrix $\cala_{\calc,\bfd}$ for $\bfd = (3,2,2)$ is displayed in Figure~\ref{f:hmpair} with row and column labels.  Note that $\bfd_F$ is a singleton unless $F$ contains $1$.
\end{example}

\begin{figure}
\begin{center}
$\begin{array}{cc|cccccccccccc|}
    \empty &\empty &\triple{1}{1}{1}& \triple{1}{1}{2}& \triple{1}{2}{1}& \triple{1}{2}{2}
    & \triple{2}{1}{1}& \triple{2}{1}{2}& \triple{2}{2}{1}& \triple{2}{2}{2}
    & \triple{3}{1}{1}& \triple{3}{1}{2}& \triple{3}{2}{1}& \triple{3}{2}{2} 
    \\[0.1in]
    \hline
    \emptyset & \cdot &1&1&1&1&1&1&1&1&1&1&1&1 \\
    \hline
    \{1\} & 1 &1&1&1&1&0&0&0&0&0&0&0&0 \\
    & 2 &0&0&0&0&1&1&1&1&0&0&0&0 \\
    \hline
    \{2\}&1 &1&1&0&0&1&1&0&0&1&1&0&0\\
    \hline
    \{3\}&1 &1&0&1&0&1&0&1&0&1&0&1&0\\
    \hline
    \{1,2\} & (1,1) &1&1&0&0&0&0&0&0&0&0&0&0 \\
    & (2,1) &0&0&0&0&1&1&0&0&0&0&0&0 \\
    \hline
    \{2,3\} & (1,1) &1&0&0&0&1&0&0&0&1&0&0&0 \\
    \hline
\end{array}$
\end{center}
\caption{The matrix $\cala_{\calc,\bfd}$ for the HM pair $(\calc,\bfd)$ in Example~\ref{e:hmpair}.}
\label{f:hmpair}
\end{figure}

We pause to note that other definitions of the matrix 
$\cala_{\calc,\bfd}$ appear in the literature.
However, they all share a common kernel over the integers, and thus define 
the same toric ideal and the same hierarchical model.
The definition of $\cala_{\calc,\bfd}$ presented here has the added benefit of being full row-rank \cite[Proposition~1]{sullivant2010}.

Given an integer matrix $A \in \ZZ^{d\times r}$,
each integer vector in the integer kernel of $A$ can be written as
$u = u^+ - u^-$ where $u^+,u^-$ have disjoint support and nonnegative entries.
The \emph{Graver basis of $A$} is the set of all $u \in \ker_\ZZ A$
such that there is no $v \in \ker_\ZZ A$ with $v \neq u$ and $v^+ \le u^+$ and $v^-\le u^-$.
In other words, $u$ cannot be expressed as a \emph{conformal sum} of $v,w \in \ker_\ZZ A$.
If $u \in \ker_\ZZ A$ has relatively prime entries and has minimal support among
elements of $\ker_\ZZ A$, then we say that $u$ is a \emph{circuit of $A$}.
Equivalently, the entries of $u$ are relatively prime and the support of $u$
is a circuit in the matroid underlying the columns of $A$.
It is easy to see that the set of circuits is a subset of the Graver basis.
For unimodular matrices (definition below),
the converse is true (Proposition 8.11 in \cite{sturmfels}) but this fails in general.

\begin{defn}\label{def:unimodular}
    A matrix $A \in \ZZ^{d\times n}$ is \emph{unimodular} if the equivalent conditions below~hold.  
    \begin{enumerate}[(a)]
    \item\label{item:minors}
    Let $A'$ be any matrix obtained from $r := \rank A$ linearly independent rows of $A$.
    Then there exists some $\lambda$ such that each $r \times r$ minor of $A'$ is $0$ or $\pm \lambda$.
    
    \item\label{item:graver}
    The Graver basis of $A$ contains only $\{0,1,-1\}$-vectors.
    
    \item\label{item:polyhedron}
    For any $b$ in the affine semigroup generated by the columns of $A$,
    the polyhedron $P_{A,b} = \{x \in \RR^s : Ax = b, x \ge 0\}$ has all integral vertices.
    \end{enumerate}
\end{defn}

See \cite{unimodularbinaryhierarchical} for an explanation as to why 
the conditions in Definition \ref{def:unimodular} are equivalent.
Definition~\ref{def:unimodular} can be seen as a kernel-invariant generalization
of the familiar definition of unimodularity for square integer matrices.
Indeed, if $A$ is unimodular, then it is easy to construct a full-rank rational matrix $B$
such that $\ker_\ZZ A = \ker_\ZZ B$ and every full-rank square submatrix of $B$
has determinant $\pm 1$.
To this end, Part~\eqref{item:polyhedron} of Definition~\ref{def:unimodular}
implies that when $A$ is unimodular,
integer programming problems over polyhedra $P_{A,b}$
can be solved via linear relaxation.
Unimodularity (as in Definition~\ref{def:unimodular}) can also be viewed 
as a generalization of total unimodularity 
(which requires that every minor is either 0 or $\pm 1$).  
That said, unimodularity is an invariant of the kernel, 
rather than just an invariant of the matrix, 
and the same does not hold for total unimodularity.  

Fix a simplicial complex $\calc$ with ground set $V$ and some $v \in V$.  
We denote by $\calc\setminus v$ the complex obtained by deleting $v$ 
from $V$ and each face of $\calc$ containing $v$.
The \emph{link of $\calc$ about $v$}, denoted $\link_v(\calc)$,
has ground set $V \setminus v$ and a face $F$ 
whenever $F \cup v$ is a face of $\calc$.
The \emph{Alexander dual of $\calc$}, denoted $\calc^*$,
is the complex on the same ground set as $\calc$
whose facets are the complements of the minimal non-faces of $\calc$.

An HM pair $(\cald,\bfd')$ is a \emph{minor} of $(\calc,\bfd)$ 
if $\cald$ can be obtained from $\calc$ via a (possibly empty) sequence of 
vertex deletions and vertex links and $d_v' \le d_v$ 
for every vertex of $\cald$.
The pair $(\calc,\bfd)$ is said to be \emph{unimodular}
if the matrix $\cala_{\calc,\bfd}$ is unimodular, 
and \emph{binary} if $\bfd = {\bf 2}$.  
In view of Proposition \ref{prop:minorClosed} below,
we say that an HM pair is \emph{minimally nonunimodular}
if it is not unimodular but every minor is.

\begin{prop}\label{prop:minorClosed}
    Minors of unimodular HM pairs are unimodular.
\end{prop}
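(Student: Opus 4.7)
The plan is to reduce to three elementary minor operations: \textbf{(I)} vertex deletion $\calc\leadsto\calc\setminus v$, \textbf{(II)} vertex link $\calc\leadsto\link_v\calc$, and \textbf{(III)} decreasing a single entry $d_v\leadsto d_v-1$ of~$\bfd$. Since any minor is a composition of such operations, it suffices to prove that each preserves unimodularity, and for this I use the Graver-basis characterization, Definition~\ref{def:unimodular}(\ref{item:graver}).

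The strategy in all three cases is a \emph{lift-and-decompose} argument. Given an element $u$ of the Graver basis of the minor matrix $\cala_{\calc',\bfd'}$, I will construct a lift $\tilde u\in\ker_{\ZZ}\cala_{\calc,\bfd}$ each of whose nonzero entries has the form $\pm u_{\bfi'}$, and such that $u$ is the restriction of $\tilde u$ to a distinguished subset $S$ of columns. Since $\cala_{\calc,\bfd}$ is unimodular, $\tilde u$ admits a conformal decomposition $\tilde u=\sum_i c_i g_i$ with each $g_i$ a $\{0,\pm1\}$-valued Graver basis element of $\cala_{\calc,\bfd}$; conformality forces $\supp(g_i)\subseteq\supp(\tilde u)\subseteq S$. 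A direct check using the definition of $\cala$ shows each restriction $g_i|_S$ lies in $\ker_{\ZZ}\cala_{\calc',\bfd'}$, so restriction gives $u=\sum_i c_i(g_i|_S)$, still conformal and with each summand $\{0,\pm1\}$-valued. Since $u$ is in the Graver basis of $\cala_{\calc',\bfd'}$, each nonzero $g_i|_S$ must equal $u$, forcing $u$ itself to be $\{0,\pm1\}$-valued.

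The lifts differ by case. For~(I), take $\tilde u_\bfi=u_{\bfi|_{V\setminus v}}$ if $\bfi_v=1$ and $\tilde u_\bfi=0$ otherwise. For~(II), take $\tilde u_\bfi=u_{\bfi|_{V\setminus v}}$ if $\bfi_v=1$, $\tilde u_\bfi=-u_{\bfi|_{V\setminus v}}$ if $\bfi_v=2$, and $\tilde u_\bfi=0$ otherwise. For~(III), take $\tilde u_\bfi=u_\bfi$ if $\bfi_v\le d_v-1$ and $\tilde u_\bfi=0$ otherwise. Verifying $\tilde u\in\ker_{\ZZ}\cala_{\calc,\bfd}$ splits by whether the row $(F,\bfj)$ in question has $v\in F$: when $v\in F$, the row sum collapses to a kernel relation for $u$ at $(F\setminus v,\bfj|_{F\setminus v})$ in the minor; when $v\notin F$, the row sum vanishes either by support (cases (I) and (III)) or by the cancellation $1+(-1)=0$ (case (II)).

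The main subtlety is case~(II): the link matrix lacks rows indexed by faces $F$ of $\calc$ with $v\notin F$ and $F\cup\{v\}\notin\face(\calc)$, so no kernel relation for $u$ is available to annihilate $\sum_{\bfi|_F=\bfj}\tilde u_\bfi$ at such rows. The sign-alternating lift is engineered precisely to kill those terms by cancellation. A secondary check arises in case~(III): the ``boundary'' rows $(F,\bfj)$ of $\cala_{\calc,\bfd}$ with $v\in F$ and $\bfj_v=d_v-1$ (absent in $\cala_{\calc,\bfd'}$) must still give vanishing sums on $\tilde u$, which follows by subtracting the kernel relations of $u$ at $(F,(\bfj|_{F\setminus v},j))$ for $j\in[d_v-2]$ from the kernel relation at $(F\setminus v,\bfj|_{F\setminus v})$.
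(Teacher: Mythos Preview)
The paper does not prove this proposition; it cites \cite[Propositions~7.1 and~7.5]{unimodularbinaryhierarchical}. Your direct argument via lifting Graver elements and conformally decomposing is a valid and self-contained alternative, and the three lifts you write down are correct.

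Two small corrections to the exposition, neither of which affects the validity of the argument. First, your one-line summary of the row-sum verification has the cases swapped in places: in cases~(I) and~(III), when $v\notin F$ the row sum does \emph{not} vanish ``by support''---it equals the kernel relation for $u$ at the row $(F,\bfj)$ of the minor (which exists, since $F$ is still a face there). The ``by support'' vanishing actually occurs in the $v\in F$ case whenever $\bfj_v$ falls outside the range on which $\tilde u$ is supported (e.g.\ $\bfj_v\ne 1$ in case~(I)). Second, in case~(II) the claim $\supp(\tilde u)\subseteq S$ is false: with $S=\{\bfi:\bfi_v=1\}$, the lift is also supported on $\{\bfi:\bfi_v=2\}$. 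Fortunately you do not actually need this containment there. For every face $F'$ of $\link_v\calc$, the face $F'\cup\{v\}$ lies in $\calc$, so the row $(F'\cup\{v\},(\bfj',1))$ of $\cala_{\calc,\bfd}$ already isolates the $\bfi_v=1$ slice; hence the restriction of \emph{any} kernel element of $\cala_{\calc,\bfd}$ to $S$ lies in $\ker_\ZZ\cala_{\link_v\calc,\bfd'}$, and the rest of your lift-and-decompose step goes through unchanged.
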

\begin{proof}
    See Propositions~7.1~and~7.5 in \cite{unimodularbinaryhierarchical}.
\end{proof}


The $n$-dimensional simplex will be denoted by $\Delta_n$
and its boundary complex by $\partial \Delta_n$.
The disjoint union of an $m$- and $n$-simplex will be 
denoted $\Delta_m \sqcup \Delta_n$, and its Alexander dual 
will be denoted $D_{m,n}$.
Given a simplicial complex $\calc$, we say $v \in \ground(\calc)$ is 
\begin{itemize}
\item 
a \emph{cone vertex} if it appears in every facet of $\calc$,

\item 
a \emph{ghost vertex} if it does not appear in any facet of $\calc$, and

\item 
and a \emph{Lawrence vertex} if its complement in the ground set of $\calc$ is a facet.

\end{itemize}
We denote by $\cone(\calc)$ and $G \calc$ the complex obtained 
by adding a cone vertex and ghost vertex to $\calc$, respectively, 
and we denote by $\Lambda \calc$ the complex obtained by adding 
a Lawrence vertex $v$ to $\calc$ such that $\link_v(\calc) = \calc$.
Iterated application of each aforementioned operation 
will be denoted by superscript.  
For example, $G^5 \calc$ denotes the complex obtained 
by adding five ghost vertices to $\calc$.  

Sullivant and the first author gave a complete characterization 
of the set of simplicial complexes $\calc$ 
such that the binary HM pair $(\calc,{\bf 2})$ is unimodular \cite{unimodularbinaryhierarchical}.
This characterization can be expressed constructively (Theorem~\ref{t:binaryConstructive})
or in terms of forbidden minors (Theorem~\ref{t:binaryMinors}).  
One of our main results extends Theorems~\ref{t:binaryConstructive} 
and~\ref{t:binaryMinors} to non-binary HM pairs, and provides 
a combinatorial description of the Graver basis
of the matrix $\cala_{\calc,\bfd}$ associated to any unimodular HM pair.

\begin{defn}
    A simplicial complex $\calc$ is said to be \emph{nuclear} if
    $\calc$ can be built up from $\Delta_m$, $\Delta_m \sqcup \Delta_n$ or $D_{m,n}$
    by applying a sequence of $\cone$, $G$ and $\Lambda$ operations.
    The complex that $\calc$ is built up from is called the \emph{nucleus} of $\calc$.
\end{defn}

\begin{thm}[{\cite[Theorem~6.3]{unimodularbinaryhierarchical}}]\label{t:binaryConstructive}
    The matrix $\cala_{\calc,\mathbf{2}}$ is unimodular if and only if $\calc$ is a nuclear simplicial complex. 
\end{thm}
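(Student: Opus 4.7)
The plan is to prove the biconditional by handling the two directions separately. For sufficiency, I would proceed by induction on the nuclear construction, and for necessity, I would use Proposition~\ref{prop:minorClosed} to reduce to a forbidden-minor analysis.

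The sufficiency direction proceeds in two steps. First, I would verify the three base cases directly: $\cala_{\Delta_m,\mathbf{2}}$ is (up to reindexing) a permuted identity block and hence trivially unimodular; $\cala_{\Delta_m \sqcup \Delta_n,\mathbf{2}}$ is the design matrix of the binary independence model of two groups, whose Kronecker-product structure makes unimodularity classical; and $\cala_{D_{m,n},\mathbf{2}}$ would follow from a small direct calculation on an explicit lattice basis. Second, I would show that each of the operations $\cone$, $G$, and $\Lambda$ preserves unimodularity of $\cala_{\calc,\mathbf{2}}$. In the binary setting the ghost-vertex operation contributes no new face, so the resulting matrix is (after reindexing) just $[\cala_{\calc,\mathbf{2}}\ |\ \cala_{\calc,\mathbf{2}}]$; column duplication trivially preserves unimodularity via the Graver-basis characterization in Definition~\ref{def:unimodular}(\ref{item:graver}). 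Coning a vertex $v$ adds a row block corresponding to the new faces $F \cup \{v\}$, and a short block-matrix calculation on the $r \times r$ minors handles the unimodularity condition. Finally, the Lawrence vertex construction recovers exactly the classical Lawrence lift of $\cala_{\calc,\mathbf{2}}$, which preserves unimodularity by standard results.

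For necessity, I would argue by contrapositive: if $\calc$ is non-nuclear, I must find a minor of $(\calc,\mathbf{2})$ that is minimally nonunimodular, so that Proposition~\ref{prop:minorClosed} forces $(\calc,\mathbf{2})$ itself to fail unimodularity. The strategy would be to first enumerate the minimally nonunimodular binary HM pairs --- this list is expected to be very short and should correspond to a few small combinatorial obstructions, in the spirit of the $K_4$-type obstruction from Sullivant's characterization of normal binary graphical models. Then I would show combinatorially that any complex admitting no cone, ghost, or Lawrence vertex and not of the base-case form must contain one such obstruction as a minor, by a careful case analysis of how the facets can intersect when none of the three distinguished vertex types is available.

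The main obstacle will be the necessity direction, specifically the structural dichotomy that either $\calc$ admits a distinguished vertex (cone, ghost, or Lawrence), is one of the base cases, or contains a small forbidden minor. The sufficiency direction reduces to standard operations on unimodular matrices once the base cases are dispatched, but showing that three operations and three base cases genuinely exhaust the nuclear complexes requires both a complete classification of minimally nonunimodular binary HM pairs and a combinatorial argument that no ``mystery'' unimodular complex escapes the nuclear construction. Identifying the right inductive invariant to drive this second step --- likely a dimension-theoretic parameter of $\calc$ or a measure of Alexander-dual complexity --- is the most delicate part of the argument.
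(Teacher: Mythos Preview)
This theorem is not proved in the present paper at all: it is quoted verbatim as \cite[Theorem~6.3]{unimodularbinaryhierarchical} and used as background, so there is no proof here for you to compare against. The companion Theorem~\ref{t:binaryMinors} is likewise imported from the same source, and the present paper's contribution (Theorem~\ref{t:completeunimodular}) builds on both as black boxes.

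As for your outline on its own merits: the sufficiency half is broadly right in shape, but one detail is off. The base case $\cala_{D_{m,n},\mathbf{2}}$ is not ``a small direct calculation on an explicit lattice basis'': $m$ and $n$ are arbitrary, and in the original Bernstein--Sullivant argument this case is handled by showing that $\cala_{D_{m,n},\mathbf{2}}$ is (up to sign changes) a Gale dual of $\cala_{\Delta_m \sqcup \Delta_n,\mathbf{2}}$, so its unimodularity is inherited from the graphic (hence totally unimodular) structure of the latter. Your treatment of $\cone$, $G$, and $\Lambda$ is fine.

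The necessity half is where your proposal has a genuine gap rather than a different route. You correctly identify that one must (i) produce the finite list of minimally nonunimodular binary pairs and (ii) prove the structural dichotomy that a complex with no cone/ghost/Lawrence vertex and not a base case must contain one of them. But your plan for both steps is essentially ``carry out the classification and case analysis,'' with no mechanism proposed. In the source paper, step~(i) is nontrivial (the list in Theorem~\ref{t:binaryMinors} has several sporadic five-vertex complexes, not just a single $K_4$-type obstruction), and step~(ii) is driven not by a dimension parameter as you suggest but by a delicate analysis of how Alexander duality interacts with the three vertex types. Absent a concrete inductive invariant or reduction, this half of your proposal is a statement of intent rather than a proof strategy.
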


\begin{thm}[{\cite[Theorem~6.3]{unimodularbinaryhierarchical}}]\label{t:binaryMinors}
    A simplicial complex $\calc$ is nuclear if and only if
    it has no minors isomorphic to any of the following:
    \begin{enumerate}[(1)]
        \item the disjoint union of the boundary of a simplex and an isolated vertex,
        \item the boundary complex of the octahedron or its Alexander dual,
        \item the path on four vertices, or
        \item a complex on ground set $\{1,2,3,4,5\}$ with $\facet(\calc)$ equal to
        \begin{itemize}
            \item $\{\{1,2\}, \{1,5\}, \{2,3,4\}, \{3,4,5\}\}$,
            \item $\{\{1,3,4\}, \{2,3,5\}, \{2,4,5\}\}$, or
            \item $\{\{1,2\}, \{2,3,5\}, \{3,4\}, \{1,4,5\}\}$.
        \end{itemize}
    \end{enumerate}
\end{thm}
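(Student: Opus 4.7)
\medskip

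\noindent\textbf{Proof proposal.}
The plan is to prove the two directions separately, with the forward implication being a direct check and the reverse implication being an inductive structure theorem.

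For the forward direction (nuclear implies no forbidden minor), I would first establish that the class of nuclear complexes is closed under the minor operations of vertex deletion and vertex link. The cleanest way is to combine Theorem~\ref{t:binaryConstructive} with Proposition~\ref{prop:minorClosed}: since nuclearity is equivalent to unimodularity of $\cala_{\calc,\mathbf{2}}$, and since unimodularity is preserved under taking minors, nuclearity must be as well. (Alternatively, one can argue combinatorially that each of the three building operations $\cone$, $G$, $\Lambda$ commutes appropriately with deletion and link, so one can push a minor operation past these operations and obtain a nuclear complex with fewer vertices.) Then I would verify by hand that none of the complexes in items (1)--(4) is nuclear: for each such complex $\calc$, one checks that no vertex of $\calc$ qualifies as a cone, ghost, or Lawrence vertex, so $\calc$ cannot be produced by an inverse nuclear construction step, and one confirms $\calc$ is not itself of the form $\Delta_m$, $\Delta_m \sqcup \Delta_n$, or $D_{m,n}$.

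For the reverse direction (no forbidden minor implies nuclear), I would induct on the number of vertices $|\ground(\calc)|$. The base cases, when $|\ground(\calc)|$ is small, can be handled by enumeration. For the inductive step, let $\calc$ be a complex on $n$ vertices with none of the forbidden minors. The goal is to locate a vertex $v$ that is a cone, ghost, or Lawrence vertex in $\calc$; once found, removing $v$ yields a smaller complex $\calc'$ which again has no forbidden minor (minors of $\calc'$ are minors of $\calc$) and so by induction is nuclear, making $\calc$ nuclear as well. If no such $v$ exists, I would split into cases based on whether $\calc$ is disconnected, whether $\calc$ has an isolated vertex, whether every facet has the same dimension, and so forth. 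In each case the absence of the particular forbidden minors listed must be used to force the existence of a cone, ghost, or Lawrence vertex, or force $\calc$ itself to be $\Delta_m$, $\Delta_m \sqcup \Delta_n$, or $D_{m,n}$. For example, the forbidden minor $\partial\Delta_n \sqcup \{v\}$ is what blocks a complex from being disconnected in an irreducible way; the octahedron boundary $\partial(\Delta_1\sqcup\Delta_1\sqcup\Delta_1)$ and its Alexander dual block certain symmetric non-cone, non-Lawrence configurations; the path $P_4$ blocks pathological one-dimensional behavior; and the three five-vertex complexes block certain mixed-dimensional obstructions that arise in link calculations.

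The main obstacle will be the structural case analysis in the reverse direction: with three distinct building operations and several forbidden minors, one must carefully dispatch every configuration of facet intersections and vertex links, using the absence of each forbidden minor at precisely the right moment. A useful organizational device is to first handle the case where $\calc$ has an isolated vertex (forcing the complement of the isolated vertex, combined with forbidden minor (1), to be a simplex, hence $\calc \iso \Delta_m \sqcup \Delta_0$), then handle the case where some vertex has a non-trivial link and try to split off a cone or Lawrence vertex, and finally use forbidden minors (3) and (4) to rule out the remaining hybrid cases. Once the reverse direction is established, the theorem follows.
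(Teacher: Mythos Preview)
The paper does not prove this theorem. Theorem~\ref{t:binaryMinors} is quoted verbatim from \cite[Theorem~6.3]{unimodularbinaryhierarchical} as a background result, with no accompanying argument in the present paper. There is therefore no ``paper's own proof'' to compare your proposal against.

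A brief remark on the proposal itself. Your forward direction leans on Theorem~\ref{t:binaryConstructive} together with Proposition~\ref{prop:minorClosed} to conclude that nuclearity is minor-closed. Be aware that in the source \cite{unimodularbinaryhierarchical}, Theorems~\ref{t:binaryConstructive} and~\ref{t:binaryMinors} are both parts of the same Theorem~6.3 and are established together; invoking one to prove the other is circular unless you know that the equivalence ``nuclear $\Leftrightarrow$ unimodular'' is proved there independently of the forbidden-minor list. The purely combinatorial alternative you mention (showing directly that deletion and link of a nuclear complex remain nuclear) avoids this issue and is the safer route. For the reverse direction, your inductive strategy of peeling off a cone, ghost, or Lawrence vertex is indeed the shape of the argument in \cite{unimodularbinaryhierarchical}, but the actual case analysis there is substantial and does not reduce to the few illustrative cases you list; in particular, the role of the three five-vertex complexes in item~(4) is more delicate than ``blocking mixed-dimensional obstructions in link calculations,'' and your sketch does not yet contain the key structural observations needed to force one of the three special vertex types to appear.
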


\section{A new unimodularity-preserving operation}\label{s:operation}
In this section we describe a new unimodularity-preserving operation that
is key in the proof of Theorem~\ref{t:completeunimodular}.
We also describe how this operation acts on the Graver basis in the case that is relevant to us.
The results in this section are about general integer matrices and do not depend on any
specific properties of hierarchical models.

Given a matrix $A \in  \mathbb{Z}^{d \times n}$, denote by $G_qA$ and $\Lambda_pA$ the matrices
\[
    G_qA = 
    \begin{pmatrix}
        A & \dots & A
    \end{pmatrix}
    \qquad \text{ and } \qquad
	\Lambda_pA = 
    \begin{pmatrix}
	    A & 0 & \dots & 0 & 0\\
	    0 & A & \dots & 0 & 0\\
	    \vdots & \vdots & \ddots & \vdots & \vdots\\
	    0 & 0 & \dots & A & 0 \\
	    I & I & \dots & I & I
	\end{pmatrix}.
\]
Note that $G_q \cala_{\calc,\bfd} = \cala_{G \calc,(\bfd \ \ q)}$
and that $\Lambda_p\cala_{\calc,\bfd}$ has the
same kernel as $\cala_{\Lambda \calc, (\bfd \ \ p)}$.
The operations $\Lambda_2$ and $G_q$ for $q \ge 1$ are unimodularity-preserving,
in the sense that applying them to a unimodular matrix produces a unimodular matrix
(for $\Lambda_2$, this follows from \cite[Theorem~7.1]{sturmfels}).
In this section, we add a new unimodularity preserving operation to the list: inserting a ghost vertex operation immediately before a Lawrence lift (Proposition~\ref{p:lawrenceghostunimodular}).  This operation provides the last crucial step in generalizing Theorem~\ref{t:binaryConstructive} to arbitrary unimodular HM pairs.  

\begin{prop}\label{p:lawrenceghostunimodular}
    Let $A \in \mathbb{Z}^{d\times n}$ be a matrix and let $p\ge 2$ be a fixed integer.
    Then $\Lambda_p A$ is unimodular if and only if $\Lambda_p G_q A$ is unimodular for all integers $q \ge 2$.
\end{prop}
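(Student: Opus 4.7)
The plan is to prove both directions via the Graver basis characterization of unimodularity (Definition~\ref{def:unimodular}(\ref{item:graver})). Throughout, I index a vector $w \in \ker_\ZZ \Lambda_p G_q A$ by triples $(i,j,c) \in [p+1] \times [q] \times [n]$ and define the ``sum over $j$'' map $\pi(w)_{i,c} := \sum_j w_{i,j,c}$. A short computation using the block structure of $\Lambda_p G_q A$ shows that $\pi$ sends $\ker_\ZZ \Lambda_p G_q A$ into $\ker_\ZZ \Lambda_p A$: the top constraints force $\pi(w)_{i,\cdot} \in \ker A$ for $i \le p$, and summing the bottom constraints over $j$ yields $\sum_i \pi(w)_i = 0$.

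For the reverse direction, I argue by contrapositive. If $v = (v_1,\ldots,v_{p+1})$ is a Graver element of $\Lambda_p A$ with some entry outside $\{0,\pm 1\}$, define its \emph{natural lift} $\widetilde v$ by $\widetilde v_{i,1,c} = v_{i,c}$ and $\widetilde v_{i,j,c} = 0$ for $j \ge 2$. One checks directly that $\widetilde v \in \ker_\ZZ \Lambda_p G_q A$, and any kernel element $u$ conformally below $\widetilde v$ must be supported on $j = 1$, so its restriction $(u_{i,1,c})_{i,c}$ is a kernel element of $\Lambda_p A$ conformally below $v$; Graver-ness of $v$ forces $u \in \{0, \widetilde v\}$. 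Hence $\widetilde v$ is a Graver element of $\Lambda_p G_q A$ with an entry outside $\{0,\pm 1\}$, showing $\Lambda_p G_q A$ is not unimodular.

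For the forward direction, I take a Graver element $w$ of $\Lambda_p G_q A$ and argue that its entries lie in $\{0,\pm 1\}$ by splitting on $v := \pi(w)$. If $v = 0$, then for each $c$ the slice $(w_{i,j,c})_{i,j}$ has all row and column sums zero, so it lies in the kernel of the vertex--edge incidence matrix of $K_{p+1,q}$; this matrix is totally unimodular with Graver basis consisting of signed $4$-cycles, and since distinct $c$-slices decouple, Graver-ness of $w$ forces it to be a single $4$-cycle with $\{0,\pm 1\}$ entries. If $v \neq 0$, then unimodularity of $\Lambda_p A$ yields a conformal decomposition $v = \sum_\ell G_\ell$ into $\{0,\pm 1\}$-valued Graver elements of $\Lambda_p A$, and I aim to construct a lift $\widetilde G \in \ker_\ZZ \Lambda_p G_q A$ that is conformally below $w$, has entries in $\{0,\pm 1\}$, and projects under $\pi$ onto some $G_\ell$. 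Given such $\widetilde G$, the decomposition $w = \widetilde G + (w - \widetilde G)$ is conformal in $\ker_\ZZ \Lambda_p G_q A$, and Graver-ness of $w$ forces $w = \widetilde G$, yielding the desired bound on entries.

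The crux of the forward direction is constructing the lift $\widetilde G$ in the $v \neq 0$ case. Slicewise, $w_{\cdot,\cdot,c}$ is an integer flow on the bipartite graph between $[p+1]$ and $[q]$ with row excesses $v_{\cdot,c}$ and balanced intermediate $j$-nodes, and one must select, for each $(i,c)$ in the support of the chosen Graver element, a column index $j^*(i,c)$ with $w_{i,j^*(i,c),c}$ of the correct sign so that the resulting $\widetilde G$ satisfies the bottom column-sum constraints of $\Lambda_p G_q A$. Small examples indicate that a naive one-hot selection can fail the column-sum constraints, so the construction will need to permit auxiliary $4$-cycle corrections of the $v=0$ type to absorb imbalances. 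I expect to make this rigorous by exploiting total unimodularity of the underlying transportation constraint matrix--which guarantees that any feasible fractional lift can be rounded to an integer one--combined with a Hall-type matching (or equivalent flow-decomposition) argument applied independently to each slice $c$.
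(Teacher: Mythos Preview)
Your reverse direction is fine (and in fact $\Lambda_p A$ is, after deleting zero rows, the column submatrix of $\Lambda_p G_q A$ on the $j=1$ columns, so this direction is immediate). In the $v=0$ subcase of the forward direction there is a small slip---the Graver basis of the incidence matrix of $K_{p+1,q}$ consists of \emph{all} signed simple cycles, not only $4$-cycles---but the conclusion survives since those cycles still have $\{0,\pm 1\}$ entries.

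The genuine gap is the $v\ne 0$ subcase, which you leave explicitly unfinished (``I expect to make this rigorous\ldots''). The difficulties you flag are real and your sketch does not resolve them. The slices are \emph{not} independent: $(G_\ell)_{\cdot,c}$ is the restriction of one global Graver element of $\Lambda_p A$, so you cannot choose it separately per $c$. There is no evident feasible fractional lift to round---the natural candidate $\widetilde G_{i,j,c}=\tfrac{(G_\ell)_{i,c}}{v_{i,c}}\,w_{i,j,c}$ has the right row sums but in general violates the per-$(j,c)$ column-sum constraints. And a fixed $G_\ell$ need not lift even slicewise: for the slice $\bigl(\begin{smallmatrix}1&0\\0&1\\-1&0\\0&-1\end{smallmatrix}\bigr)$ with $p+1=4$, $q=2$, the target row sums $(1,0,0,-1)$ admit no $\{0,\pm1\}$ conformal lift with zero column sums, so at minimum one must argue that \emph{some} Graver summand of $v$ lifts, which you have not done. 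Your ``auxiliary $4$-cycle corrections'' have $\pi=0$, so they cannot help select a liftable $G_\ell$.

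By contrast, the paper avoids Graver bases altogether and proves the proposition via the determinant criterion (Definition~\ref{def:unimodular}(\ref{item:minors})): it shows, by Laplace expansion and explicit row and column operations, that every maximal minor of $\Lambda_p G_q A$ agrees up to sign with a maximal minor of a matrix whose form does not depend on $q$, so the set of nonzero maximal minors is the same for all $q\ge 1$. This sidesteps the lifting problem entirely.
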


\begin{proof}
    Assume $A$ has full row-rank $d$.
    It follows that $\Lambda_p G_q A$ and $\Lambda_p A$ do as well.
    So, it suffices to show that any maximal square submatrix of $\Lambda_p G_q A$
    has determinant $\pm 1$ or $0$ whenever any maximal square submatrix of $\Lambda_p A$ does.
    We proceed by showing that the possible
    values of a determinant of such a sub-matrix is independent of $q$.
    To this effect, we claim that the absolute value of any such determinant
    is equal to the absolute value of a determinant of the form
    \[	\begin{array}{rl}
    		\begin{array}{c}\ \\ \ \\ \ \\ \ \\ \ \vspace{-9pt}\\ \{1\} \\ \vdots \\ 
    		R \\ S \\ T \\ \vdots \\ \left[p-1\right] \end{array}
	    	& \hspace{-10pt}\left[\begin{array}{c|cccc|cccc|c|cccc}
	    		\dots & A^R_1 & 0 & \dots & 0 & A^S_1 & 0 & \dots & 0 & \dots &B_1&0&\dots& 0 \\
	    		\dots & 0 & A^R_2 & \dots & 0 & 0 & A^S_2 & \dots & 0 & \dots &0&B_2&\dots& 0 \\
	    		\dots & \vdots & \vdots & \ddots & \vdots & \vdots & \vdots & \ddots & \vdots & \dots &\vdots&\vdots&\ddots&\vdots \\
	    		\dots & 0 & 0 & \dots & A^R_{p-1}& 0 & 0 & \dots & A^S_{p-1} & \dots &0&0&\dots&B_{p-1}\\ \hline
	    		\dots & 0 & 0 & \dots & 0 & 0 & 0 & \dots & 0 & \dots &0&0&\dots&0 \\
	    		\dots & 0 & 0 & \dots & 0 & \vdots & \vdots & \ddots & \vdots & \dots &0&0&\dots&0 \\
	    		\dots & I^R_1 & I^R_2 & \dots & I^R_{p-1}& 0 & 0 & \dots & 0 & \dots &0&0&\dots&0 \\
	    		\dots & 0 & 0 & \dots & 0& I^S_1 & I^S_2 & \dots & I^S_{p-1} & \dots &0&0&\dots&0 \\
	    		\dots & 0 & 0 & \dots & 0& 0 & 0 & \dots & 0 & \dots &0&0&\dots&0 \\
	    		\dots & \vdots & \vdots & \ddots & \vdots& \vdots & \vdots & \ddots & \vdots & \dots &\vdots&\vdots&\dots&\vdots \\
	    		\dots & 0 & 0 & \dots & 0& 0 & 0 & \dots & 0& \dots &0&0&\dots&0
	    	\end{array}\right]
    	\end{array}
    \]
    where in the block corresponding to each $S \subseteq [p-1]$,
    if $i \notin S$ then $A_i^S = I_i^S$ is the unique $0\times 0$ matrix,
    and if $i \in S$, then $A_i^S$ is some constant matrix $A^S$ and $I_i^S$
    is the identity matrix with the same number of columns as $A^S$.
    To see this, note that a square column submatrix of $\Lambda_pG_qA$
    will have a (possibly empty) submatrix of an identity matrix for its final columns.
    We can find the determinant by applying Laplace expansion about these columns.
    Each column of the resulting matrix is obtained by padding a column of $A$
    either with all zeros, or all zeros and a single 1.
    We move all columns of the former description to the rightmost side of the matrix
    and rearrange them to match the right hand part of the block structure shown above.
    For the rest of the block structure,
    note that these remaining columns are naturally partitioned into
    $p-1$ blocks of the form
    $\begin{pmatrix}0 & A_i^T & 0& I\end{pmatrix}^T$
    where $i = 1,\dots p-1$ and $A_i$ is a column submatrix of $A$.
    Each column $a_j$ of $A$ appears in some subset of these blocks
    which we can index by a subset of $[p-1]$.
    We can then organize the columns of this submatrix according to this subset indexing
    and then organize the bottom block of rows to get the desired block structure.
    
    Now if $S$ is a singleton,
    then every row in the block of rows labeled $S$ has
    exactly one $1$ and all the remaining entries $0$.
    Therefore, we can remove all those blocks using Laplace expansion along these rows.
    Using the identity matrices in the bottom half of our matrix,
    we can apply row operations to turn each row-block of the form
    $\begin{pmatrix}0 & \cdots & 0 & A^S\end{pmatrix}$
    into $\begin{pmatrix}-A^S & \cdots & -A^S & 0\end{pmatrix}$.
    This leaves several columns that have a single $1$ and all other entries $0$.
    Applying Laplace expansion along these columns leaves the matrix
    \[
    	\left[\begin{array}{c|ccc|ccc|c|cccc}
	    	\cdots & A^R & \cdots & 0 & A^S & \cdots & 0 & \cdots &B_1&\cdots& 0&0 \\
	    	\vdots & \vdots & \ddots & \vdots & \vdots & \ddots & \vdots & \vdots &\vdots&\ddots&\vdots &\vdots \\
	    	\cdots & 0 & \cdots & A^R & 0 & \cdots & A^S & \cdots &0&\cdots& B_{p-2}& 0 \\
	    	\cdots & -A^R & \cdots & -A^R& -A^S & \cdots & -A^S & \cdots &0&\cdots & 0 &B_{p-1}\\ 
	    \end{array}\right].
    \]
    Note that each $A^S$ is either the $0 \times 0$ matrix, or a column submatrix of $A$ (possibly with repeated columns).
    So the set of possible nonzero determinants is independent of $q$.
\end{proof}

We conclude this section by demonstrating how to recover the Graver basis of $\Lambda_3 G_q A$ from the Graver basis of $\Lambda_3 A$, for use in Corollary~\ref{c:allgraverbases}.  
In what follows, we write each $v \in \ker \Lambda_3 G_q A$ in the form $v = (a, b, c)$ with $a, b, c \in \ker G_q A$.  Additionally, we let $v_i = (a_i, b_i, c_i) \in \ZZ^{3q}$ denote the restriction of $v$ to coordinates corresponding to the $i$-th column of $A$ (in particular, these columns are not sequential above).  

\begin{prop}\label{p:lawrenceghostgraver}
    Let $A \in \mathbb{Z}^{d\times n}$ be a matrix and fix $q \ge 1$.
    Assume $\Lambda_3 G_q A$ is unimodular.
    Then the Graver basis of $\Lambda_3 G_q A$ consists of vectors $v = (a, b, c)$ obtained in the following ways (up to scaling by $-1$ and permutation of $a$, $b$ and $c$): 
    \begin{enumerate}[(a)]
    \item 
    for some $i \le n$ and $j, k \le q$, $a_{i,j} = b_{i,k} = 1$ and $a_{i,k} = b_{i,j} = -1$ are the only nonzero entries in $v$;
    
    
    \item 
    for every $i \le n$, each vector $a_i$, $b_i$, and $c_i$ has at most 1 nonzero entry, and writing $a_i'$, $b_i'$ and $c_i'$ for the sum of the entries of $a_i$, $b_i$, and $c_i$ respectively, the vector $(a_1', \ldots, a_n', b_1', \ldots, b_n', c_1', \ldots, c_n')$ lies in the Graver basis of $\Lambda_3 A$; or
    
    \item 
    for some $i \le n$ and $j, k \le q$, $a_{i,j} = 1$, $a_{i,k} = -1$, $b_{i,j} = -1$, and $c_{i,k} = 1$, and the vector $v' = (a', b', c')$ with all coordinates the same as in $v$, aside from
    \[
        a_{i,j}' = a_{i,k}' = 0, \quad c_{i,j}' = 1, \quad \text{and} \quad c_{i,k}' = 0
    \]
    also lies in the Graver basis of $\Lambda_3 G_q A$.  
    
    \end{enumerate}
\end{prop}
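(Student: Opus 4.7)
The strategy is to prove both containments: first that each vector of types (i), (ii), (iii) lies in the Graver basis, and then that every Graver basis element of $\Lambda_3 G_q A$ arises in one of these ways, up to the stated symmetries.  Throughout I will use two recurring facts: unimodularity of $\Lambda_3 G_q A$ forces every Graver entry into $\{-1, 0, 1\}$, and the triple constraint $a_{i,j} + b_{i,j} + c_{i,j} = 0$ holds at every ghost position $(i, j)$ for any kernel element.

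For the forward direction, the verification for type (i) is a direct check: any sign-compatible kernel sub-element of the four-support vector is forced by the column-sum constraint and the two triple constraints at $(i, j)$ and $(i, k)$ to be either $0$ or the vector itself.  For type (ii) I introduce the projection $\pi \colon \ker \Lambda_3 G_q A \to \ker \Lambda_3 A$ that sums over ghost copies; the hypothesis that each of $a_i$, $b_i$, $c_i$ has at most one nonzero entry in $v$ guarantees that any hypothetical conformal decomposition of $v$ descends via $\pi$ to a nontrivial conformal decomposition of the Graver element $\pi(v)$ in $\ker \Lambda_3 A$, which is a contradiction.

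Type (iii) is the most delicate case.  Assuming $v'$ is Graver and supposing $v = w_1 + w_2$ conformally with $w_1, w_2$ nonzero, sign compatibility together with the forced zeros $v_{c,i,j} = v_{b,i,k} = 0$ and the two triple constraints at $(i, j)$ and $(i, k)$ constrain $w_1$ at the four special positions to exactly four patterns parametrized by bits $s, t \in \{0, 1\}$.  The two degenerate cases $(s, t) \in \{(0, 0), (1, 1)\}$ reduce, after tracking compatibility with the rest of $v$, to $w_1 \in \{0, v\}$; the two mixed cases $(s, t) \in \{(0, 1), (1, 0)\}$ can be rewritten as a conformal decomposition of $v'$ itself by replacing the local $(a, c)$-swap in one of the $w_i$'s with a $(b, c)$-swap, contradicting $v'$ Graver.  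This rewriting step is the main obstacle: one must verify that the modification remains in the kernel and is simultaneously sign-compatible with $v'$ at all positions outside the four special ones.

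For the converse, given a Graver basis element $v$ I first rule out that any of $a_i$, $b_i$, $c_i$ contains two nonzero entries of the same sign; otherwise one can pair each such entry with a matching opposite-sign entry elsewhere to exhibit an explicit conformal splitting, exploiting the extra freedom provided by the $q$ ghost copies.  With this restriction, each of $a_i$, $b_i$, $c_i$ has at most one $+1$ and one $-1$ entry.  If every block satisfies the ``at most one nonzero per component'' condition then $\pi(v) \in \ker \Lambda_3 A$ is itself Graver by reversing the lift-project argument, placing $v$ in type (ii); otherwise some $a_i$ has opposite-sign entries at $(i, j)$ and $(i, k)$, and a case analysis of the six possible triple patterns at these two positions collapses, up to permutation of $a, b, c$ and sign, to either a type (i) swap (when the induced nonzero entries in $b_i$ and $c_i$ line up into a single pair) or the type (iii) pattern, with the associated $v'$ then Graver by the reverse of the forward-direction analysis.
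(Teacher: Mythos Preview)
Your treatment of the direction ``every Graver element has one of the three forms'' follows the same route as the paper: use unimodularity and the identity-block constraint $a_{i,j}+b_{i,j}+c_{i,j}=0$ to force each coordinate triple into $\{0,0,0\}$ or $\{1,0,-1\}$, rule out repeated same-sign entries in any $a_i$, $b_i$, $c_i$, then case-split. You are in fact more careful than the paper on two points it leaves tacit: that the projected vector $\pi(v)$ in case~(ii) must lie in the Graver basis of $\Lambda_3 A$, and that the reduced vector $v'$ in case~(iii) must be Graver rather than merely in the kernel. The paper's proof establishes only that $v'\in\ker\Lambda_3 G_q A$; your ``reverse of the forward-direction analysis'' is the right instinct for filling this in, though it deserves its own verification since it is not the contrapositive.

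The forward direction---that every vector of type (i), (ii), or (iii) is actually Graver---is not addressed in the paper's proof at all, although the statement and its use in Corollary~\ref{c:allgraverbases} require it. Your arguments for (i) and (ii) are fine. For (iii) there is a genuine gap in your mixed cases $(s,t)\in\{(0,1),(1,0)\}$. The move you describe, ``replacing the local $(a,c)$-swap in one of the $w_i$'s with a $(b,c)$-swap,'' does not keep the modified summand in $\ker\Lambda_3 G_q A$: changing a single ghost position in the $a$-block (or $b$-block) alters $\pi(a)_i$ (or $\pi(b)_i$) by $\pm 1$, so the copies-of-$A$ constraint $A\pi(\,\cdot\,)=0$ fails unless the $i$th column of $A$ is zero. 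By contrast, in the degenerate cases $(s,t)\in\{(0,0),(1,1)\}$ the correct move is to add the kernel element $v'-v$ to whichever summand carries both special positions; this preserves the kernel precisely because the changes at $(i,j)$ and $(i,k)$ cancel in $\pi$. In the mixed cases those two positions are split between $w_1$ and $w_2$, and no modification of a single summand supported only at $(i,j)$ or $(i,k)$ can stay in the kernel. A correct treatment of the mixed case needs either a two-position modification that respects $\pi$, or an argument that when $v'$ is Graver the mixed split cannot occur.
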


We clarify the statement of Proposition \ref{p:lawrenceghostgraver} with an example
before giving the proof.

\begin{example}\label{e:lawrenceghostgraver}
    Consider the matrix 
    \[
        A = \left[\begin{array}{ccc}
        1 & 1 & 0 \\
        0 & 1 & 1
        \end{array}\right],
    \]
    whose Graver basis consists of $v = \bfe_1 - \bfe_2 + \bfe_3$ and its negative.
    One can compute the Graver basis of $\Lambda_3 A$ using the software 4ti2 \cite{4ti2} to check
    unimodularity.
    It follows from Proposition \ref{p:lawrenceghostunimodular} that $\Lambda_3 G_q A$
    is unimodular for all $q$.
    Up to reordering, every vector in the Graver basis of $\Lambda_3 A$ has the form $(v,-v,0)$.  Using Proposition~\ref{p:lawrenceghostgraver}, we can obtain Graver basis vectors for $\Lambda_3 G_3 A$ in the following ways.  
    \begin{itemize}
    \item 
    Vectors of the form $(a,-a,0)$ for some $a$ in the Graver basis of $G_3 A$, yielding type~(i) vectors such as $((\bfe_1,-\bfe_1,0),(-\bfe_1,\bfe_1,0),(0,0,0))$, and type~(ii) vectors such~as
    \[
        ((\bfe_1 + \bfe_3,-\bfe_2,0),(-\bfe_1 - \bfe_3,\bfe_2,0),(0,0,0))
    \]
    obtained by ``spreading'' the vector $v$ across $G_3 A$.  Writing the latter vector in the form $(a,b,c)$ and using the notation introduced above Proposition \ref{p:lawrenceghostgraver},
    we have $a_1 = (1,0,0)$, $b_1 = (-1,0,0)$ and $c_1 = (0,0,0)$, whose entries correspond to the columns of $\Lambda_3 G_3 A$ containing the first column of $A$.   
    \item 
    Vectors obtained from another Graver basis vector $(a,b,c)$ by (up to reordering of $a$, $b$ and $c$) moving a single nonzero entry of $a$ to an unoccupied entry of $a$ corresponding to the same column of $A$, and then adding a 1 and a -1 to $c$ appropriately.  This yields Graver basis vectors such as 
    \[
        ((\bfe_1,-\bfe_1,0),(-\bfe_1,0,\bfe_1),(0,\bfe_1,-\bfe_1)) \quad \text{ and }
    \]
    \[
        ((\bfe_1 + \bfe_3,-\bfe_2,0),(-\bfe_3,\bfe_2,-\bfe_1),(-\bfe_1,0,\bfe_1))
    \]
    obtained from the each of the vectors above.  Notice that vectors obtained in this way are
    always of type~(iii), and that this process applied can be applied to each type~(i) and type~(ii) vector at most once for each column of~$A$.  
    \end{itemize}
\end{example}

\begin{proof}[Proof of Proposition \ref{p:lawrenceghostgraver}]
    Suppose $v$ lies in the Graver basis of $\Lambda_3 G_q A$.
    Since $\Lambda_3 G_q A$ is unimodular,
    the first $qn$ rows of $\Lambda_3 G_q A$ ensure that
    $\{a_{i,j}, b_{i,j}, c_{i,j}\}$ is $\{0,0,0\}$ or $\{1, 0, -1\}$ for all $i \le n$ and $j \le q$.  We first prove that $a_i$, $b_i$, and $c_i$ each have no repeated nonzero entries for $i \le n$.  To this end, suppose after appropriately scaling $v$ and relabeling $a$, $b$ and $c$ that $a_{i,j} = a_{i,k} = 1$, $b_{i,j} = -1$, and $c_{i,j} = 0$ for some $j, k \le q$.  
    Then $\{b_{i,k}, c_{i,k}\} = \{0,-1\}$, and in either case, the vector $v' = (a', b', c')$ obtained from $v$ by setting 
    \[
        a_{i,j}' = a_{i,j} + a_{i,k} = 2, \quad b_{i,j}' = b_{i,j} + b_{i,k}, \quad c_{i,j}' = c_{i,j} + c_{i,k}, \quad \text{and} \quad a_{i,k}' = b_{i,k}' = c_{i,k}' = 0,
    \]
    is a conformal sum of nonzero vectors in $\ker_\ZZ A$ if and only if $v$ is,
    which contradicts the unimodularity of $\Lambda_3 G_q A$.
    
    Next, if for all $i$, the vectors $a_i$, $b_i$, and $c_i$ each have at most one nonzero entry, then we are in case~(ii) above.  Otherwise, after appropriate scaling of $v$ and relabeling of $a$, $b$ and $c$, we have $a_{i,j} = 1$, $a_{i,k} = -1$, $b_{i,j} = -1$, and $c_{i,j} = 0$ for some $i \le n$ and $j, k \le q$.  If~$b_{i,k} = 1$, then these 4 nonzero entries form a primitive vector in the kernel of $\Lambda_3 G_q A$, so $v$ has no other nonzero entries and we are in case~(i) above.  In all remaining cases, $b_{i,k} = 0$ and $c_{i,k} = 1$, meaning we are in case~(iii) above.  
    Note that the vector $v'$ constructed from $v$ still yields 0 in the $j$-th and $k$-th rows of the identity blocks portion of $\Lambda_3 G_q A$, as well as each row of $\Lambda_3 G_q A$ consisting of copies of $A$.  As such, $v'$ still lies in the kernel of $\Lambda_3 G_q A$.   
    This completes the proof.
\end{proof}

\section{The classification}\label{s:classification}

In this section, we present the complete classification of the unimodular HM pairs.  Just as~in the characterization of unimodular binary HM pairs that appeared in \cite{unimodularbinaryhierarchical}, our~classification comes in two forms: a recipe for constructing any unimodular HM pair, and~a list of forbidden minors.

\begin{obs}\label{o:changeComplex}
    Different HM pairs may yield the same matrix.
    Let $\calc$ be a complex on ground set $V$.
    Assume $\calc$ has a face $E$
    such that any for any facet $F$,
    $E \cap F \neq \emptyset$ implies $E \subseteq F$.
    Let $\calc'$ denote the complex on ground set $V' := (V \cup \{v_0\}) \setminus E$
    with facets
    \[
        \facet(\calc') = \{F \in \facet(\calc)  : F \cap E = \emptyset\}
        \cup \{F \cup \{v_0\} : E \subset F\}.
    \]
    Let $\bfd \in \ZZ_{\ge 2}^V$
    and define $\bfd' \in \ZZ_{\ge 2}^{V'}$
    such that $\bfd_v' = \bfd_v$ for all $v \in V \cap V'$
    and $\bfd_{v_0}' = \prod_{v \in F} \bfd_v$.
    Then $\ker_\ZZ\cala_{\calc,\bfd} = \ker_\ZZ\cala_{\calc',\bfd'}$.
\end{obs}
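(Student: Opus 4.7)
My plan is to identify the columns of the two matrices via a natural bijection and show that their $\QQ$-row spans agree, forcing equality of the integer kernels. Since $\bfd_{v_0}' = \prod_{v \in E} \bfd_v$, both matrices have $\prod_{v \in V} \bfd_v$ columns, so fixing any bijection $\phi \colon \prod_{v \in E}[\bfd_v] \to [\bfd_{v_0}']$ lets me identify column indices by leaving the $(V \setminus E)$-coordinates alone and mapping the $E$-coordinates to the $v_0$-coordinate. It is standard (via inclusion-exclusion using the empty-face row of all ones) that the $\QQ$-row span of $\cala_{\calc,\bfd}$ coincides with the span of all indicator functions $\chi_{\bfi_F = \bfj}$ as $F$ ranges over $\face(\calc)$ and $\bfj$ over $\prod_{v \in F}[\bfd_v]$, with the same statement for $\cala_{\calc',\bfd'}$, so it suffices to exchange such indicators across the two spans.

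Rows of $\cala_{\calc',\bfd'}$ split into two types. A face of $\calc'$ not containing $v_0$ is a face of $\calc$ disjoint from $E$, so its indicator is literally an indicator of $\cala_{\calc,\bfd}$. A face containing $v_0$ has the form $G \cup \{v_0\}$ with $G \cap E = \emptyset$ and $G \cup E \in \face(\calc)$, since $G \cup \{v_0\}$ must lie in some facet $(F \setminus E) \cup \{v_0\}$ of $\calc'$ with $F \supseteq E$. Under the column bijection its indicator becomes $\chi_{\bfi_{G \cup E} = (\bfj, \phi^{-1}(k))}$, which is an indicator in the $\cala_{\calc,\bfd}$ span.

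In the reverse direction, for a row $\chi_{\bfi_F = \bfj}$ of $\cala_{\calc,\bfd}$ I case-split on $F \cap E$. If $F \cap E = \emptyset$ or $E \subseteq F$, the indicator translates directly (in the latter case applying $\phi$ to the $E$-coordinates). The case $\emptyset \subsetneq F \cap E \subsetneq E$ is where the hypothesis is essential: $F$ lies in some facet $F^\ast$ of $\calc$, and $F^\ast \cap E \supseteq F \cap E \neq \emptyset$ forces $E \subseteq F^\ast$, so $F \cup E \subseteq F^\ast$ is a face of $\calc$. One can then expand
\[
    \chi_{\bfi_F = \bfj} = \sum_{\bfj_E} \chi_{\bfi_{F \cup E} = (\bfj|_{F \setminus E}, \bfj_E)},
\]
where $\bfj_E$ ranges over tuples in $\prod_{v \in E}[\bfd_v]$ that extend $\bfj|_{F \cap E}$; each summand is an indicator in the span from $\cala_{\calc',\bfd'}$ by the previous subcase. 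The main obstacle, and the single step requiring the hypothesis, is precisely this last case: without the assumption that every facet meeting $E$ contains $E$, the set $F \cup E$ need not be a face of $\calc$, and the row indexed by $(F, \bfj)$ would fail to be expressible in terms of rows of $\cala_{\calc',\bfd'}$.
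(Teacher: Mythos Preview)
The paper states this as an \emph{Observation} without proof, so there is no argument in the paper to compare against; your proof fills in what the authors left to the reader. Your approach---identify columns via a bijection $\phi\colon\prod_{v\in E}[\bfd_v]\to[\bfd'_{v_0}]$, then show the $\QQ$-row spans coincide using the full family of indicators $\chi_{\bfi_F=\bfj}$---is correct and is exactly the natural way to verify the claim. The inclusion--exclusion step recovering all indicators (not just those with $\bfj\in\bfd_F$) from the rows of $\cala_{\calc,\bfd}$ is standard and valid, and your case analysis on $F\cap E$ is complete; in particular you correctly identify that the hypothesis on $E$ is needed precisely to force $F\cup E\in\face(\calc)$ in the partial-intersection case. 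One minor comment: you also implicitly use the hypothesis in the case $F\cap E=\emptyset$ to conclude $F\in\face(\calc')$, since the facet of $\calc$ containing $F$ might itself contain $E$ and hence not survive as a facet of $\calc'$---but then $F\subseteq F^\ast\setminus E\subseteq (F^\ast\setminus E)\cup\{v_0\}\in\facet(\calc')$, which you handle correctly. (You also silently corrected the paper's typo $\prod_{v\in F}\bfd_v$ to $\prod_{v\in E}\bfd_v$.)
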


\begin{prop}\label{prop:minimallyNonUnimodular}
    The following HM pairs are minimally nonunimodular:
    \begin{enumerate}[(1)]
        \item\label{item:nuclear} any of the complexes listed in Theorem~\ref{t:binaryMinors} with $\bfd = {\bf 2}$,
        
        \item\label{item:triangle} $\Lambda(\Delta_0 \sqcup \Delta_0)$ with facets $\{12,23,13\}$
        and ${\bf d} = (3,3,3)$,

        \item\label{item:lawrence3} $\Lambda(\Delta_1 \sqcup \Delta_1)$ with facets
        $\{125,345,1234\}$ and ${\bf d} = (2,2,2,2,3)$,

        \item\label{item:lawrence3Delt} $\Lambda(\Delta_1 \sqcup \Delta_0)$ with facets
        $\{124,34,123\}$ and $\bfd = (2,2,3,3)$,
        
        \item\label{item:square} $D_{1,1}$ with facets $\{12,23,34,14\}$ and ${\bf d} = (2,2,2,3)$,
        
        \item\label{item:basket4} $\Lambda G(\Delta_0\sqcup \Delta_0)$ with facets $\{12,13,234\}$
        and ${\bf d} = (4,2,2,2)$,
        
        \item\label{item:lawrenceSquare} $\Lambda D_{1,1}$ with facets $\{1234,125,235,345,145\}$
        and ${\bf d} = (2,2,2,2,3)$, and
        
        \item\label{item:doubleLawrence3} $\Lambda \Lambda G (\Delta_0\sqcup \Delta_0)$ with facets $\{1234,1235,145,245\}$
        and $\bfd = (2,2,2,3,3)$.
    \end{enumerate}
\end{prop}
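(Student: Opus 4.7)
My plan is to handle the eight families separately, since (1) reduces to a previously proved binary result while (2)--(8) require genuine computation. Each part of the proof splits into two independent claims: that the pair itself fails unimodularity, and that every proper minor is unimodular. I would treat these two sub-claims in that order for each family.

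Part (1) is immediate from Theorem~\ref{t:binaryMinors}: those complexes are exactly the binary minimal obstructions, and since $\bfd=\mathbf 2$ cannot be weakened while staying in $\ZZ_{\ge 2}^V$, the only minors to check are the combinatorial ones, which are covered by that theorem.

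For parts (2)--(8), I would establish nonunimodularity by producing, in each case, an explicit element of the Graver basis of $\cala_{\calc,\bfd}$ with some entry of absolute value at least $2$, violating condition~(\ref{item:graver}) of Definition~\ref{def:unimodular}. For the smallest case, the triangle $\Lambda(\Delta_0\sqcup\Delta_0)$ with $\bfd=(3,3,3)$, such a binomial can be written down directly: the $3$-cycle with three distinct states at each vertex yields a cubic move on $\cala_{\calc,\bfd}$ that cannot be decomposed conformally. For the remaining seven cases I would either display an analogous explicit binomial or, equivalently, exhibit a concrete maximal square submatrix of $\cala_{\calc,\bfd}$ whose determinant is $\pm 2$ alongside another whose determinant is $\pm 1$, violating condition~(\ref{item:minors}). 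In practice this step is most efficiently verified with \texttt{4ti2} \cite{4ti2}.

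For the minor-minimality side, my plan is a finite case analysis: for each pair $(\calc,\bfd)$ in (2)--(8), enumerate all proper minors up to the automorphism group of $\calc$. The three families to consider are vertex deletions $(\calc\setminus v,\bfd|_{V\setminus v})$, links $(\link_v\calc,\bfd|_{V\setminus v})$, and weight reductions $(\calc,\bfd')$ with $\bfd'\le\bfd$ and $\bfd'\ne\bfd$; one then recurses. For each such minor I would verify unimodularity by exhibiting it as a member of a known unimodular family via one of the following routes: if $\bfd'=\mathbf 2$, Theorem~\ref{t:binaryConstructive} applies once the minor is seen to be nuclear; otherwise the minor is obtained from a smaller unimodular pair by one of the unimodularity-preserving operations $\cone$, $G_q$, $\Lambda_2$, or the composite $\Lambda_p G_q$ guaranteed by Proposition~\ref{p:lawrenceghostunimodular}; and in a handful of borderline cases Observation~\ref{o:changeComplex} is used to replace an HM pair with another having the same integer kernel but a simpler structure.

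The main obstacle is simply the bookkeeping in this last step. The lists of minors for (7) and (8) are moderately large because of the ground set of size five combined with multiple Lawrence and ghost vertices, so the symmetry reduction and the identification of each minor with a known unimodular construction are the most tedious ingredients. No new structural ideas are needed beyond Proposition~\ref{p:lawrenceghostunimodular}; the proof is essentially a verification that each forbidden pair sits at the boundary of the unimodular region carved out by the operations from Section~\ref{s:operation} together with the binary classification.
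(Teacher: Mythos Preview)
Your plan is correct and would succeed, but the paper's own proof is much terser because it offloads almost everything to prior work and to one structural reduction you did not anticipate.  For item~(1) the paper cites \cite[Proposition~4.1]{unimodularbinaryhierarchical}, and for items~(2), (5), (6), (7) it cites \cite[Proposition~7.9]{unimodularbinaryhierarchical}, so no new case analysis is carried out at all for those five families.  For items~(3) and~(4) the paper does \emph{not} exhibit a bad Graver element or a bad minor directly; instead it applies Observation~\ref{o:changeComplex} to show that $\cala_{\Lambda(\Delta_1\sqcup\Delta_1),(2,2,2,2,3)}$ and $\cala_{\Lambda(\Delta_1\sqcup\Delta_0),(2,2,3,3)}$ have the same integer kernels as $\cala_{\{12,23,13\},(4,4,3)}$ and $\cala_{\{12,23,13\},(4,3,3)}$ respectively, each of which contains the triangle pair~(2) as a proper minor, whence nonunimodularity follows from Proposition~\ref{prop:minorClosed}.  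This is the opposite use of Observation~\ref{o:changeComplex} from what you sketched: the paper uses it to push nonunimodularity \emph{down}, not to certify unimodularity of minors.  Finally, for item~(8) you should be aware that the paper reports the full Graver basis computation does not finish in \texttt{4ti2}; they instead restrict to a random subset of columns large enough to produce a Graver element with an entry of absolute value exceeding~$1$, with the supporting computation posted online~\cite{BernsteinWeb2016}.  Your direct strategy would hit the same wall, so plan for that workaround.
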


\begin{proof}
    \cite[Proposition~4.1]{unimodularbinaryhierarchical}
    gives minimal nonunimodularity of the complexes listed in Theorem~\ref{t:binaryMinors}.
    The Graver basis for~\eqref{item:doubleLawrence3} was too large to compute,
    but selecting a large enough random sample of the matrix's columns
    enabled 4ti2~\cite{4ti2} to produce
    Graver basis elements with entries of absolute value strictly greater than $1$.
    Computations are available on our website~\cite{BernsteinWeb2016}.
    In light of Observation~\ref{o:changeComplex},
    we see that the matrix for~\eqref{item:lawrence3} has the same integer kernel as the matrix for
    the HM pair $(\{12,23,13\},(4,4,3))$ and the matrix for~\eqref{item:lawrence3Delt}
    has the same integer kernel as the matrix for the HM pair $(\{12,23,13),(4,3,3)\}$
    which both have~\eqref{item:triangle} as a minor.
    Minimal nonunimodularity of the remaining HM pairs are given by
    \cite[Proposition~7.9]{unimodularbinaryhierarchical}.
\end{proof}

\begin{thm}\label{t:completeunimodular}
    The following are equivalent:
    \begin{enumerate}[(a)]
        \item\label{item:pairUni}
        $(\calc,\bfd)$ is unimodular;
        
        \item\label{item:pairNoMinor}
        $(\calc,\bfd)$ contains no minor isomorphic to any HM pair listed 
        in Proposition~\ref{prop:minimallyNonUnimodular}; and
        
        \item\label{item:pairConstructive}
        $\calc$ is nuclear. If $\calc$ has nucleus $D_{m,n}$, then $\bfd_v = 2$ for each Lawrence vertex $v$
        and each vertex $v$ from the nucleus $D_{m,n}$.
        Otherwise, we can choose the vertices
        of $\calc$ to make nucleus $\Delta_m \sqcup \Delta_n$ so that either
        	\begin{enumerate}[(1)]
        	    \item $\bfd_v = 2$ for each Lawrence vertex $v$, or
        	    \item $\min\{m,n\} = 0$, $\bfd_v = 2$ for the unique vertex $v$ of $\Delta_0$
        	    and $\bfd_v \le 3$ for each Lawrence vertex $v$ with equality attained at most once.
        	\end{enumerate}
    \end{enumerate}
\end{thm}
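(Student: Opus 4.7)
The plan is to establish the cyclic chain of implications (a) $\Rightarrow$ (b) $\Rightarrow$ (c) $\Rightarrow$ (a). The implication (a) $\Rightarrow$ (b) is immediate: Proposition~\ref{prop:minorClosed} shows unimodularity is preserved under taking minors, while Proposition~\ref{prop:minimallyNonUnimodular} asserts that each listed HM pair is nonunimodular, so a unimodular HM pair cannot contain any of them as a minor.

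For (c) $\Rightarrow$ (a), I would proceed by structural induction on the build-up of $\calc$ from its nucleus. The base cases split into three: $\calc = \Delta_m$ with arbitrary $\bfd$ corresponds to the saturated model whose design matrix is essentially square and obviously unimodular; $\calc = \Delta_m \sqcup \Delta_n$ with arbitrary $\bfd$ is the independence-model case and is a standard unimodular construction; and $\calc = D_{m,n}$ with $\bfd = \mathbf{2}$ is unimodular by Theorem~\ref{t:binaryConstructive}. The inductive step verifies each permitted operation preserves unimodularity under the stipulated weight constraints. Cone vertices produce a block-diagonal matrix that is easily shown to be unimodular; ghost vertices correspond to $G_q A$, which merely duplicates the columns of $A$ and thus trivially preserves unimodularity; Lawrence vertices with $\bfd_v = 2$ are the classical Lawrence lift $\Lambda_2$, which preserves unimodularity by \cite[Theorem~7.1]{sturmfels}. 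The genuinely new case is the single Lawrence vertex $v$ with $\bfd_v = 3$ permitted in case (c)(2): here Proposition~\ref{p:lawrenceghostunimodular} is decisive, letting us reduce unimodularity of $\Lambda_3 A$ to that of $\Lambda_3 G_q A$. Combining this with Observation~\ref{o:changeComplex} allows us to absorb the weight-$2$ $\Delta_0$-vertex of the nucleus into the newly introduced ghost, and we finish by a direct verification on the resulting matrix (or by comparing it to a situation already handled in the previous inductive steps).

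For (b) $\Rightarrow$ (c), assume $(\calc, \bfd)$ contains no minor from Proposition~\ref{prop:minimallyNonUnimodular}. Since the pairs in Proposition~\ref{prop:minimallyNonUnimodular}(1) are exactly the forbidden complexes of Theorem~\ref{t:binaryMinors} weighted by $\mathbf{2}$, and since weights can be decreased when passing to a minor, $\calc$ must be nuclear. The remaining weight constraints are read off by casework on the nucleus: if the nucleus is $D_{m,n}$, any vertex of the nucleus or Lawrence vertex with weight $\ge 3$ yields (by iterated deletion/link) a forbidden minor of type (5) or (7); if the nucleus is $\Delta_m \sqcup \Delta_n$ with $\min\{m,n\} \ge 1$, a Lawrence vertex of weight $\ge 3$ reduces to forbidden minor (3); and if the nucleus is $\Delta_m \sqcup \Delta_0$, the four possible ways of violating (c)(2) --- a $\Delta_0$ vertex of weight $\ge 3$ together with a Lawrence of weight $\ge 3$, two Lawrence vertices of weight $\ge 3$, a Lawrence vertex of weight $\ge 4$ after a ghost, or all of $\bfd$ equal to $3$ on the triangle --- correspond respectively to forbidden minors (4), (8), (6), and (2).

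The two main obstacles are as follows. First, the $\Lambda_3$ step in (c) $\Rightarrow$ (a) is subtle because $\Lambda_3$ is not itself a unimodularity-preserving operation; it only becomes so in combination with a ghost and in the presence of the right ambient structure, which is why Proposition~\ref{p:lawrenceghostunimodular} and Observation~\ref{o:changeComplex} must be applied together and with care. Second, the (b) $\Rightarrow$ (c) casework demands a uniform scheme for extracting the listed forbidden minors from any weight violation: one has to verify that on a nuclear complex the minor-taking operations (vertex deletion and link) can always be orchestrated to strip away cone, ghost, and $\Lambda_2$ layers without destroying the offending high-weight vertex, and then reduce the remaining nucleus to the minimal configuration where the forbidden minor appears.
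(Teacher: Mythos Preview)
Your overall architecture---the cyclic implications (a)$\Rightarrow$(b)$\Rightarrow$(c)$\Rightarrow$(a), the appeal to Propositions~\ref{prop:minorClosed} and~\ref{prop:minimallyNonUnimodular} for (a)$\Rightarrow$(b), Theorems~\ref{t:binaryConstructive}--\ref{t:binaryMinors} to force nuclearity in (b)$\Rightarrow$(c), and the forbidden-minor casework to pin down the weight constraints---matches the paper's proof. Your (b)$\Rightarrow$(c) casework is on the right track, though the paper spends more effort on the ``rechoosing the nucleus'' issue you flag as an obstacle: in particular it explicitly uses the symmetry of $\Lambda(\Delta_m\sqcup\Delta_0)$ in its two non-$\Delta_m$ vertices to swap a weight-$\ge 3$ Lawrence vertex with the $\Delta_0$ vertex when possible.

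The genuine gap is in your (c)$\Rightarrow$(a) treatment of the weight-$3$ Lawrence vertex. Your induction is organized as ``build $\calc$ from the nucleus outward, each operation preserving unimodularity,'' but $\Lambda_3$ does not fit that scheme: it is \emph{not} unimodularity-preserving in isolation, as you yourself note. Your proposed repair---use Proposition~\ref{p:lawrenceghostunimodular} to pass from $\Lambda_3 A$ to $\Lambda_3 G_q A$ and then ``absorb the weight-$2$ $\Delta_0$-vertex into the newly introduced ghost''---does not go through as stated: the $\Delta_0$ vertex is an isolated facet, not a ghost, so Observation~\ref{o:changeComplex} does not merge it with a ghost layer, and in any case you have not explained why $\Lambda_3 G_q A$ should be any easier than $\Lambda_3 A$.

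The paper resolves this by reorganizing the induction. After commuting cone vertices out and using $\Lambda_2\Lambda_3 = \Lambda_3\Lambda_2$ to push the single $\Lambda_3$ to the \emph{outermost} layer, one inducts on the inner complex $\calc'$, maintaining the invariant ``$(\Lambda\calc',(\bfd',3))$ is unimodular.'' The inductive step is: if $(\Lambda\calc',(\bfd',3))$ is unimodular then so is $(\Lambda G\calc',(\bfd',q,3))$ by Proposition~\ref{p:lawrenceghostunimodular} (this is the useful direction---you stated it backwards), and so is $(\Lambda\Lambda\calc',(\bfd',2,3))$ since it has the same matrix as $(\Lambda\Lambda\calc',(\bfd',3,2)) = \Lambda_2$ applied to $(\Lambda\calc',(\bfd',3))$. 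The base case is $(\Lambda(\Delta_m\sqcup\Delta_0),\bfe)$ with Lawrence weight $3$ and $\Delta_0$ weight $2$; here Observation~\ref{o:changeComplex} collapses $\Delta_m$ (not $\Delta_0$) to a single vertex of weight $p=\prod_{v\in\Delta_m}\bfd_v$, yielding the triangle HM pair $(\{12,13,23\},(3,2,p))$, whose unimodularity is cited from \cite{ohsugi-hibi2007}.
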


\begin{proof}
	The implication~\eqref{item:pairUni}~$\implies$~\eqref{item:pairNoMinor}
	follows from Propositions~\ref{prop:minimallyNonUnimodular} and \ref{prop:minorClosed}.
	Now, if $(\calc,\bfd)$ satisfies~\eqref{item:pairNoMinor}, then via the minors in
    \eqref{item:nuclear}, Theorems~\ref{t:binaryConstructive} 
    and~\ref{t:binaryMinors} imply $\calc$ is nuclear.
    So~\eqref{item:pairConstructive} follows once we show that the 
    remaining minors in Proposition~\ref{prop:minimallyNonUnimodular} 
    ensure $\bfd$ satisfies the necessary requirements.
    
    If $\calc$ has nucleus $D_{m,n}$, then minors~\eqref{item:square}
    and~\eqref{item:lawrenceSquare} ensure that~\eqref{item:pairConstructive} is satisfied, so assume $\calc$ has nucleus 
    $\Delta_m \sqcup \Delta_n$ and that $\calc$ has a Lawrence vertex $v$ 
    such that $\bfd_v \ge 3$.  If $\bfd_v \ge 4$,
    then minor~\eqref{item:basket4} ensures
    that in the iterative construction of $\calc$ as a nuclear complex,
    $v$ must have been added before any ghost vertices.
    Therefore $\calc$ is built up from $\Lambda^k(\Delta_m \sqcup \Delta_m)$.
    Permuting the order in which we add a Lawrence vertex
    does not change the resulting vertex-labeled complex
    so we may assume $k = 1$.  Minor~\eqref{item:lawrence3} 
    ensures we can assume without loss of generality that $n=0$.
    So at this point, we know $(\calc,\bfd)$
    has been built from an HM pair $(\calc',\bfd')$
    where $\calc' = \Lambda(\Delta_m\sqcup \Delta_0)$.
    Minor~\eqref{item:lawrence3Delt} ensures that
    $\bfd_u' = 2$ for the vertex $u$ from $\Delta_0$.
    However, we can realize this same complex where $u$ is the 
    Lawrence vertex and $v$ is the unique vertex in $\Delta_0$, 
    so we could have chosen a different set of vertices to play the role of 
    the nucleus that would not require $v$ to be added as a Lawrence vertex.
    
    To complete the proof of
    \eqref{item:pairNoMinor}~$\implies$~\eqref{item:pairConstructive}, 
    it remains to consider the case where $\bfd_v \in \{2,3\}$ 
    for each Lawrence vertex $v$.  Using a similar argument as before,
    we can see that if all the Lawrence vertices with $\bfd_v = 3$
    are added before any ghost vertices in the construction 
    of~$\calc$ as a nuclear complex,
    then by choosing different vertices to play the role of the nucleus
    we could have $\bfd_v = 2$ for all Lawrence vertices $v$.
    So assume all non-binary Lawrence vertices are added after a ghost vertex.
    Minor~\eqref{item:doubleLawrence3} ensures that at most one Lawrence
    vertex $v$ can have $\bfd_v = 3$, and minors~\eqref{item:triangle},
    \eqref{item:lawrence3}, and~\eqref{item:lawrence3Delt}
    imply the remaining conditions.
    
    It remains to show \eqref{item:pairConstructive} $\implies$ \eqref{item:pairUni}
    so assume $(\calc,\bfd)$ satisfies \eqref{item:pairConstructive}.
    If $\calc$ has nucleus $D_{m,n}$, then \cite[Theorem~7.10]{unimodularbinaryhierarchical} implies $(\calc,\bfd)$ is unimodular
    if and only if it satisfies \eqref{item:pairConstructive}.
    Since the operation $\cone$ commutes with $\Lambda$ and $G$ and
    unimodularity of $(\calc,\bfd)$ is independent of $\bfd_v$ for all cone vertices $v$,
    we may assume that $\calc$ can be obtained without using the $\cone$ operation.
    We may also assume that $G$ is never applied twice in a row,
    as doing so yields the same matrix as simply adding a single 
    ghost vertex with a larger vertex label.
    
    If, on the other hand, $\calc$ has nucleus $\Delta_m$,
    then since $\Lambda G \Delta_m = \cone^{m+1}(\Delta_0 \sqcup \Delta_0)$,
    we may restrict attention to the final case, namely where $\calc$ has nucleus $\Delta_m \sqcup \Delta_n$.
    In particular, 
    \[  
        \calc = \Lambda^{k_0} G \Lambda^{k_1} G 
        \Lambda^{k_2} G \cdots G \Lambda^{k_{l}}(\Delta_m \sqcup \Delta_n).
    \]

    

    If $(\calc,\bfd)$ satisfies the first case of (\ref{item:pairConstructive})
    then unimodularity of $(\calc,\bfd)$ follows from unimodularity of $(\Delta_m \sqcup \Delta_n,\bfd')$
    and the fact that adding Lawrence vertices with label $2$
    and ghost vertices of any vertex label preserves unimodularity.
    So it only remains to show unimodularity of pairs
    $(\calc,\bfd)$ that satisfy the second case of (\ref{item:pairConstructive}).

    Let $(\calc',\bfd')$ be a hierarchical model.
    We claim that if $(\Lambda\calc',(\bfd', 3))$ is unimodular, then
    $(\Lambda G \calc',(\bfd', q, 3))$ and $(\Lambda \Lambda \calc',(\bfd', 2, 3))$ are unimodular as well.
    Indeed, unimodularity of $(\Lambda G \calc',(\bfd', q, 3))$ follows from 
    Proposition~\ref{p:lawrenceghostunimodular} and
    $\ker_\ZZ(\cala_{\Lambda G \calc',(\bfd', q, 3)}) = \ker_\ZZ(\Lambda_3 G_q \cala_{\calc',\bfd'})$
    and $\ker_\ZZ(\cala_{\Lambda \calc',(\bfd', 3)}) = \ker_\ZZ(\Lambda_3 \cala_{\calc',\bfd'})$.
    Additionally, $(\Lambda \Lambda \calc',(\bfd', 2, 3))$ has the same defining matrix as $(\Lambda \Lambda \calc',(\bfd', 3, 2))$,
    which is unimodular if and only if $(\Lambda \calc',(\bfd', 3))$ is.
    At this point, unimodularity of $(\calc,{\bfd})$ follows by induction 
    if we show unimodularity of the HM pair $(\Lambda(\Delta_m \sqcup \Delta_0),\bfe)$
    where $\bfe_v = 3$ for the Lawrence vertex and $\bfe_v = 2$ for the
    vertex of $\Delta_0$.
    Letting $p$ be the product of the vertex labels in $\Delta_m$,
    this HM pair has the same matrix as the HM pair $(\{12,13,23\},(3,2,p))$
    which was shown to be unimodular in \cite{ohsugi-hibi2007}.
\end{proof}

As a corollary of Theorem~\ref{t:completeunimodular}, we obtain 
a classification of the unimodular \emph{discrete undirected graphical models},
that is, hierarchical models whose simplicial complex 
is the clique complex of a graph.
For a given graph $G$, we let $\calc(G)$ denote the clique complex of $G$.
A \emph{suspension vertex} of $G$ is a vertex that 
shares an edge with every other vertex.
Let $S^kG$ denote the graph obtained by adding $k$ suspension vertices to $G$.

\begin{cor}\label{c:cliquecomplex}
    The matrix $\cala_{\calc(G),\bfd}$ is unimodular if and only if one of the following holds: 
    \begin{enumerate}[(a)]
    	\item $G$ is a complete graph;
        \item $G = S^kC_4$, where $C_4$ is the four-cycle and $\bfd_v = 2$
        for each vertex from $C_4$; or
        \item $G$ is obtained by gluing two complete graphs along a (possibly empty) common clique.
    \end{enumerate}
\end{cor}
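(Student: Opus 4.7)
The plan is to invoke Theorem~\ref{t:completeunimodular} and characterize which nuclear HM pairs $(\calc(G),\bfd)$ have $\calc(G)$ equal to the clique complex of some graph, then read off the $\bfd$-constraints in each surviving case.

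I would begin by analyzing the three possible nuclei. Both $\Delta_m$ and $\Delta_m \sqcup \Delta_n$ are clique complexes -- of $K_{m+1}$ and $K_{m+1} \sqcup K_{n+1}$ respectively -- while the Alexander dual $D_{m,n}$ is a clique complex exactly when $(m,n) = (1,1)$, in which case $D_{1,1} \cong \calc(C_4)$. For $\max(m,n) \ge 2$, the vertices of the larger-side simplex form a clique in the $1$-skeleton of $D_{m,n}$ that is not itself a face, so $D_{m,n}$ is not flag, and this defect persists under coning. Next, I would observe that the cone operation at the level of graphs corresponds to adjoining a suspension vertex and manifestly preserves the clique-complex property. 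Iterated cones on the three admissible nuclei therefore produce exactly the three families in the corollary: $\cone^k \Delta_m = \Delta_{m+k} = \calc(K_{m+k+1})$ gives case~(1); $\cone^k(\Delta_m \sqcup \Delta_n) = \calc(K_{m+k+1} \cup_{K_k} K_{n+k+1})$ gives case~(3); and $\cone^k D_{1,1} = \calc(S^k C_4)$ gives case~(2).

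The remaining step is to show that the $G$ and $\Lambda$ operations cannot enlarge this list. Any ghost vertex lies in no face of the resulting complex and so prevents $\calc$ from being a clique complex, unless a later Lawrence operation re-incorporates it via the complement facet. A direct computation gives $\Lambda G^k \Delta_m = \cone^{m+1}(\Delta_{k-1}\sqcup\Delta_0)$, which is already captured by case~(3), so this combination yields no new clique complexes. For a Lawrence operation applied to any non-simplex $\calc'$, fix a non-face $\{u,w\}$ of $\calc'$; in $\Lambda\calc'$ the three edges $\{u,v\},\{v,w\},\{u,w\}$ are all present (since the complement of $v$ is a facet) while $\{u,v,w\}$ is not a face (as $\{u,w\}\notin \calc' = \link_v \Lambda\calc'$), so flagness fails. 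An induction on the number of subsequent operations then shows that a clique non-face persists: a cone or ghost preserves the current clique non-face verbatim, and a further Lawrence turns it into a face but simultaneously creates the lifted clique non-face obtained by adjoining the new Lawrence vertex. Hence flagness is irreparably violated, and every nuclear clique complex must have a cone-only representation over one of the three allowed nuclei.

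Finally I would translate the $\bfd$-conditions from Theorem~\ref{t:completeunimodular}. Since none of the cone-only constructions introduces any Lawrence vertices, every constraint on Lawrence-vertex labels in Theorem~\ref{t:completeunimodular} is vacuous. For nuclei $\Delta_m$ and $\Delta_m \sqcup \Delta_n$ this leaves $\bfd$ entirely unrestricted, matching the absence of $\bfd$-conditions in cases~(1) and~(3). For nucleus $D_{1,1}$ the theorem requires $\bfd_v = 2$ on each vertex of the nucleus, i.e., on the four vertices of the underlying $C_4$, with no constraint on the suspension vertices -- exactly case~(2). The main obstacle I anticipate is the persistence argument for flagness under non-cone operations, together with the observation that although $\Lambda G^k$ applied to a simplex happens to produce a clique complex, this case is harmlessly subsumed by case~(3) via a different (cone-only) construction.
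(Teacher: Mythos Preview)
The paper's proof is two lines: it cites \cite[Lemma~5.3]{unimodularbinaryhierarchical} for the classification of nuclear complexes that happen to be clique complexes, and then reads off the $\bfd$-constraint for $S^kC_4$ from Theorem~\ref{t:completeunimodular}. Your route is different in that you reprove the cited lemma from scratch by tracking how each operation interacts with flagness; this is more self-contained and explains \emph{why} the three families in the corollary are exactly the flag nuclear complexes, at the cost of more casework.

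There is, however, a real gap in your $\Lambda$-step. You write ``For a Lawrence operation applied to any non-simplex $\calc'$, fix a non-face $\{u,w\}$ of $\calc'$,'' and conclude that $\{u,v,w\}$ is a clique non-face of $\Lambda\calc'$. Two things go wrong. First, not every non-simplex has a two-element non-face: the nuclear complex $\partial\Delta_2=\Lambda(\Delta_0\sqcup\Delta_0)$ contains every edge but lacks the full triangle, so your argument does not apply to it even though $\Lambda\partial\Delta_2=\partial\Delta_3$ is indeed non-flag. Second, your claim that ``the three edges $\{u,v\},\{v,w\},\{u,w\}$ are all present since the complement of $v$ is a facet'' only justifies $\{u,w\}$; the edges $\{u,v\}$ and $\{v,w\}$ require $\{u\},\{w\}\in\calc'$, which fails when $u$ or $w$ is a ghost --- and this is exactly how $\calc'=G^k\Delta_m$ (a non-simplex) escapes your conclusion and yields the flag complex $\Lambda G^k\Delta_m$ you handled separately. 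The correct statement is: if $\calc'$ has a minimal non-face $S$ with $|S|\ge 2$ (equivalently, $\calc'\ne G^k\Delta_m$), then every proper subset of $S$ lies in $\calc'=\link_v\Lambda\calc'$, so $S\cup\{v\}$ is a clique in $\Lambda\calc'$, it is a non-face since $S\notin\calc'$, and $|S\cup\{v\}|\ge 3$. Your persistence argument under subsequent cone, ghost, and Lawrence operations then goes through unchanged with this larger clique non-face.
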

\begin{proof}
    The constraints on $G$ follow immediately from \cite[Lemma~5.3]{unimodularbinaryhierarchical}.
    When $G = S^kC_4$,
    Theorem~\ref{t:completeunimodular} implies the constraints on $\bfd$.
\end{proof}

\section{The Graver basis of a unimodular hierarchical model}\label{s:graver}

In the final section of this paper, we present a combinatorial 
characterization of the Graver basis of any unimodular hierarchical model's 
defining matrix.  Following the constructive characterization of unimodular 
hierarchical models in Theorem~\ref{t:completeunimodular}, our 
characterization comes in two steps: (i) a description of the Graver basis 
of each nucleus, and (ii) a description of how to obtain the Graver basis 
of a matrix produced by one of the unimodularity preserving operations
allowed by Theorem~\ref{t:completeunimodular}\eqref{item:pairConstructive} given the Graver basis of its input matrix.  

Proposition~\ref{p:binaryCircuits} characterizes the Graver basis of each 
possible nucleus.   For any unimodular matrix, the Graver basis consists of 
vectors with entries in $\{0,1,-1\}$ and coincides with the set of circuits
(see Proposition 8.11 in \cite{sturmfels}).  
As such, the characterization presented in Proposition~\ref{p:binaryCircuits} 
simply describes the signed circuits
of the oriented matroid underlying each given hierarchical model.
We remind the reader how to construct oriented matroids 
from signed circuits and cuts of a directed graph;
for a more thorough introduction to oriented matroids, see~\cite{bjorner1999oriented}.

A \emph{signed circuit} of a directed graph $G$ is a bipartition
of the set of edges in a simple cycle
$v_1,\dots,v_n,v_1$ of the undirected graph underlying $G$
according to whether or not the edge $v_iv_{i+1}$ agrees with $G$'s orientation.
The edges that agree are called \emph{positive} while those that disagree are called \emph{negative}.
A \emph{signed bond} of $G$ is a bipartition of the set of edges in a bond (i.e. minimal cut)
of $G$ that splits $G$ into connected components $A$ and $B$
according to whether or not the edge $e$ is directed from $A$ to $B$.
The edges pointing from $A$ to $B$ are called \emph{positive} while those
pointing from $B$ to $A$ are called \emph{negative}.
The set of signed circuits and the set of signed bonds of $G$
each are the signed circuits of an oriented matroid.
These two oriented matroids are dual to each other.

Let $K_{2^{m+1},2^{n+1}}$ denote the complete bipartite graph on
partite sets $2^{[m+1]}$ and $2^{[n+1]}$.
Label the vertices in each partite set by the binary $(m+1)$- and 
$(n+1)$-tuples in $\{1,2\}^{m+1}$ and $\{1,2\}^{n+1}$, respectively.
Each edge is naturally labeled with the $(m+n+2)$-tuple
obtained by concatenating the labels of its vertices.
Let $G_1^{m,n}$ be the directed graph with underlying undirected graph 
$K_{2^{m+1},2^{n+1}}$ where all edges are directed from the partite set 
$2^{[m+1]}$ to the partite set $2^{[n+1]}$.
Let $G_2^{m,n}$ be the directed graph obtained from $G_1^{m,n}$ by
reversing the orientation of the edges whose 
$(m+n+2)$-tuple has an odd number of twos.

\begin{prop}\label{p:binaryCircuits}
	The set of signed circuits of the oriented matroid underlying
    the columns of $\cala_{\Delta_m}$ is empty.
    The signed circuits of the oriented matroid underlying
    the columns of $\cala_{\Delta_m \sqcup \Delta_n,{\bf 2}}$
    are the signed circuits of $G_1^{m,n}$.
    The signed circuits of the oriented matroid underlying
    the columns of $\cala_{D_{m,n},{\bf 2}}$
    are the signed bonds of $G_2^{m,n}$.
\end{prop}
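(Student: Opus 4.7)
The plan is to dispatch the three claims separately, with the $D_{m,n}$ case carrying the real content. For $\cala_{\Delta_m}$, the goal is to show the columns are linearly independent, so the kernel and hence the set of signed circuits are trivial. I would induct on the number of ``maxed-out'' coordinates $|\{k : \bfi_k = d_k\}|$ of a column index $\bfi$: the rows indexed by the full face $[m+1]$ force $c_\bfi = 0$ for every $\bfi$ with no maxed-out coordinates, and for the inductive step, the row indexed by the complementary face $[m+1]\setminus K$ and parameter agreeing with $\bfi$ off of $K$ sums the $c$-values of all columns agreeing with $\bfi$ off of $K$, all but $\bfi$ itself having strictly fewer maxed-out coordinates.

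For $\cala_{\Delta_m \sqcup \Delta_n, {\bf 2}}$, I would first identify columns with edges of $K_{2^{m+1},2^{n+1}}$ via $\bfi = (\bfi_1,\bfi_2) \leftrightarrow \bfi_1\bfi_2$, and then show that the row space of $\cala_{\Delta_m \sqcup \Delta_n, {\bf 2}}$ coincides with that of the (unsigned) vertex-edge incidence matrix $M$ of $K_{2^{m+1},2^{n+1}}$. In one direction, the row for a face $F \subseteq [m+1]$ equals the sum of vertex-indicator rows $\mathbf{1}(\bfi_1 = v)$ over left vertices $v$ with $v_k = 1$ for $k \in F$; in the other, each vertex indicator is a signed sum of face rows via inclusion-exclusion on the coordinates of $v$ equal to $2$. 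Consequently $\ker \cala_{\Delta_m \sqcup \Delta_n, {\bf 2}} = \ker M$ is the cycle space of $K_{2^{m+1},2^{n+1}}$, whose signed circuits under the orientation $G_1^{m,n}$ are by classical graphic-matroid theory precisely the signed cycles of $G_1^{m,n}$.

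For $\cala_{D_{m,n}, {\bf 2}}$, I would establish oriented-matroid duality with $\cala_{\Delta_m \sqcup \Delta_n, {\bf 2}}$, twisted by the diagonal sign matrix $\Sigma$ with $\Sigma_{\bfi,\bfi} = (-1)^{\#\{k : \bfi_k = 2\}}$. A direct computation shows that the $(F,G)$ entry of $\cala_{D_{m,n}, {\bf 2}} \, \Sigma \, \cala_{\Delta_m \sqcup \Delta_n, {\bf 2}}^T$ equals
\[
    \sum_{\bfi : \bfi_k = 1 \text{ for all } k \in F \cup G} (-1)^{\#\{k \notin F \cup G : \bfi_k = 2\}},
\]
which factors into a product over the free coordinates $k \notin F \cup G$ of the vanishing sum $1 + (-1) = 0$, and thus is $0$ unless $F \cup G = [m+n+2]$. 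The latter never occurs: any $F \in \face(D_{m,n})$ omits a vertex from each side of $\Delta_m \sqcup \Delta_n$, while $G$ lies entirely within one side. Combined with $\rank \cala_{D_{m,n},{\bf 2}} + \rank \cala_{\Delta_m \sqcup \Delta_n,{\bf 2}} = 2^{m+n+2}$, this forces $\ker \cala_{D_{m,n}, {\bf 2}}$ to equal $\Sigma$ applied to the row space of $\cala_{\Delta_m \sqcup \Delta_n, {\bf 2}}$, i.e., to the bond space of $G_1^{m,n}$.

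Finally, applying $\Sigma$ to the bond space of $G_1^{m,n}$ yields the bond space of $G_2^{m,n}$: reversing the orientation of an edge $e$ negates the $e$-coordinate in every signed bond, and the edges reversed in passing from $G_1^{m,n}$ to $G_2^{m,n}$ are exactly those whose label has an odd number of $2$s, matching the sign pattern of $\Sigma$. The main obstacle in this program is keeping the sign bookkeeping in the duality calculation honest; once the matrix identity $\cala_{D_{m,n}, {\bf 2}} \, \Sigma \, \cala_{\Delta_m \sqcup \Delta_n, {\bf 2}}^T = 0$ is carefully verified, the identification of signed circuits of $\cala_{D_{m,n}, {\bf 2}}$ with signed bonds of $G_2^{m,n}$ is formal.
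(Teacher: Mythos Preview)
Your argument is correct and follows the same overall architecture as the paper's proof: trivial kernel for $\Delta_m$, identification with the graphic matroid of $K_{2^{m+1},2^{n+1}}$ for $\Delta_m\sqcup\Delta_n$, and oriented-matroid (Gale) duality for $D_{m,n}$.  The difference is that you work everything out directly, whereas the paper outsources the heavy lifting.  For the first claim the paper simply observes that $\cala_{\Delta_m}$ is square of determinant~$1$, which is quicker than your induction on maxed-out coordinates.  For the second claim the paper asserts that a sign change on suitable rows of $\cala_{\Delta_m\sqcup\Delta_n,\mathbf 2}$ already \emph{is} the vertex--arc incidence matrix of $G_1^{m,n}$; your inclusion--exclusion argument showing equality of row spaces is more explicit and arguably cleaner, since the two matrices do not literally have the same number of rows.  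For the third claim the paper invokes \cite[Proposition~3.6]{unimodularbinaryhierarchical}, which produces $\cala_{D_{m,n},\mathbf 2}$ from a specific Gale dual of $\cala_{\Delta_m\sqcup\Delta_n,\mathbf 2}$ by column and row negations; you instead verify the orthogonality $\cala_{D_{m,n},\mathbf 2}\,\Sigma\,\cala_{\Delta_m\sqcup\Delta_n,\mathbf 2}^T=0$ by hand, which is a nice self-contained substitute.  The one step you assert without justification is the rank identity $\rank\cala_{D_{m,n},\mathbf 2}+\rank\cala_{\Delta_m\sqcup\Delta_n,\mathbf 2}=2^{m+n+2}$; this follows because both matrices are full row-rank (stated in Section~\ref{s:background}) and because $S\mapsto [m+n+2]\setminus S$ is a bijection between faces of $\calc^*$ and non-faces of $\calc$, so $|\face(\calc)|+|\face(\calc^*)|=2^{m+n+2}$.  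With that remark added, your proof is complete and has the advantage of not depending on the earlier paper.
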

\begin{proof}
	For the first statement, note that the matrix $\cala_{\Delta_m}$
	is square with determinant $1$ and therefore has trivial kernel. 
	For the second statement,
	recall that row operations do not affect the underling oriented matroid.
	Then note that after multiplying the appropriate rows of
	$\cala_{\Delta_m \sqcup \Delta_n,\mathbf{2}}$
    by $-1$, we obtain the vertex-arc incidence matrix of $G_1^{m, n}$.

	By Proposition 3.6 in \cite{unimodularbinaryhierarchical} and its proof,
    $\cala_{D_{m,n},{\bf 2}}$ can be obtained from a particular Gale dual $B$ of
    $\cala_{\Delta_m \sqcup \Delta_n,{\bf 2}}$ by negating the
    columns of $B$ corresponding to the binary $(m+n+2)$-tuples
    with an odd number of twos, then
    negating all negative rows of the resulting matrix.
    The oriented matroid underlying the columns of $B$ is dual
    to the oriented matroid underlying the columns of $\cala_{\Delta_m \sqcup \Delta_n,{\bf 2}}$.
    Therefore, the signed circuits
    of the oriented matroid underlying the columns of $B$
    are the signed bonds of $G_1^{m,n}$.
    On the oriented matroid level,
    the process of turning $B$ into $\cala_{D_{m,n},{\bf 2}}$
    by negating the appropriate rows and columns has the effect
    of reversing the orientation of the edges of $G_1^{m,n}$
    corresponding to binary $(m+n+2)$-tuples with an odd number of twos.
    This gives us $G_2^{m,n}$.
\end{proof}

\begin{example}\label{e:binaryCircuits}
	We illustrate Proposition~\ref{p:binaryCircuits} in the case $m = 1$ and $n = 0$.
	We can draw the relevant simplicial complexes as follows.
	Note that $D_{1,0}$ has a ghost vertex which we indicate pictorially with an open circle.
	\[
	\begin{tikzpicture}
		\node at (-2,-1){$\Delta_{1} \sqcup \Delta_0 =$};
	    \vertex (1) at (0,0)[label=left:$1$]{};
	    \vertex (2) at (0,-2)[label=left:$2$]{};
	    \vertex (3) at (1,-1)[label=right:$3$]{};
	    \path
	    	(1) edge (2)
	    ;
	\end{tikzpicture}
	\qquad\qquad
	\begin{tikzpicture}
		\node at (-2,-1){$D_{1,0}=$};
	    \vertex (1) at (0,0)[label=left:$1$]{};
	    \vertex (2) at (0,-2)[label=left:$2$]{};
	    \ghost (3) at (1,-1)[label=right:$3$]{};
	\end{tikzpicture}
	\]
	Figure~\ref{f:binaryCircuits} depicts
	$\cala_{\Delta_{1} \sqcup \Delta_0,{\bf 2}}$ 
	and $\cala_{D_{1,0},{\bf 2}}$
	alongside the directed graphs $G_1^{1,0}$ and $G_2^{1,0}$.  
	
	The edges corresponding to the binary $3$-tuples
	$221,222,212,211$ form a cycle in $G_1^{1,0}$ with $221$ and $212$
	having positive orientation, and $222$ and $211$ both having negative orientation.
	Therefore the vector $e_{221} + e_{212} - e_{222} - e_{211}$ is in the Graver basis
	of $\cala_{\Delta_1 \sqcup \Delta_0,{\bf 2}}$.
	The edges corresponding to the binary $3$-tuples $221,211,121,111$
	form a minimal cut in $G_2^{1,0}$,
	partitioning the vertices into $\{1\}$ and its complement.
	Calling these sets $A$ and $B$ respectively,
	note that the edges corresponding to $221$ and $111$ point from $A$ into $B$
	whereas the edges corresponding to $211$ and $121$ point from $B$ into $A$.
	Therefore, the vector $e_{221} + e_{111} - e_{211} - e_{121}$ lies in 
	the Graver basis of $\cala_{D_{1,0},{\bf 2}}$.
\end{example}

We now state the obvious extension of Proposition \ref{p:binaryCircuits}
to the case where $\calc = \Delta_m \sqcup \Delta_n$
but $\bfd = {\bf 2}$ need not hold.

\begin{prop}\label{p:nonbinaryCircuits}
    Let $\calc = \Delta_m \sqcup \Delta_n$ with vertex set $\{1,\dots,m+n+2\}$
    and facets $\{1,\dots,m+1\}$ and $\{m+2,\dots,m+n+2\}$.
    Fix $\bfd \in \ZZ^{m+n+2}$ and let $G^{m,n}_\bfd$ be the complete bipartite graph
    with partite vertex sets $[d_1]\times\dots\times[d_{m+1}]$
    and $[d_{m+2}]\times\dots\times[d_{m+n+2}]$, with edges oriented so that they all point towards the same partite set.
    Label each edge by the element of $[d_1]\times[d_{m+n+2}]$
    obtained by concatenating its vertices.
    Then the signed circuits of the oriented matroid underlying the columns
    of $\cala_{\calc,\bfd}$ are the signed circuits of $G^{m,n}_\bfd$.
\end{prop}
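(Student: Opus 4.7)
The plan is to reduce to the case $\calc = \Delta_0 \sqcup \Delta_0$ and then recognize the resulting matrix as (a row operation away from) the vertex-arc incidence matrix of a complete bipartite graph, at which point the claim becomes the standard identification of signed circuits in a graphic oriented matroid.

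First, I would apply Observation~\ref{o:changeComplex} with $E = \{1,\ldots,m+1\}$, noting that $E \cap F$ is either empty or all of $E$ for each facet $F$ of $\calc$. This collapses the vertices of $E$ into a single vertex of weight $D_1 := d_1 \cdots d_{m+1}$, yielding $\Delta_0 \sqcup \Delta_n$ while preserving the integer kernel. Applying the observation again to the other facet gives $\Delta_0 \sqcup \Delta_0$ with weight vector $(D_1, D_2)$, where $D_2 := d_{m+2} \cdots d_{m+n+2}$. Since the oriented matroid on the columns of a matrix depends only on its integer kernel, this reduction preserves signed circuits, and the natural column bijection identifies each $(\bfi, \bfk) \in [d_1] \times \cdots \times [d_{m+n+2}]$ with the corresponding element of $[D_1] \times [D_2]$.

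Second, I would show that $\cala_{\Delta_0 \sqcup \Delta_0, (D_1, D_2)}$, after appropriate row operations and sign changes on rows (neither of which affects the oriented matroid of the columns), becomes the vertex-arc incidence matrix of the complete bipartite graph $K_{D_1, D_2}$ with all edges oriented from the first partite set to the second. Concretely, the kernel consists of arrays $u \in \ZZ^{D_1 \times D_2}$ whose row and column marginals all vanish, which is exactly the circulation condition for this directed bipartite graph. This argument is a direct generalization of the binary case in the proof of Proposition~\ref{p:binaryCircuits}, and can also be done by rewriting the rows of $\cala_{\Delta_0 \sqcup \Delta_0, (D_1, D_2)}$ so that each column of the transformed matrix has exactly one $+1$ in the rows indexed by the first partite set and exactly one $-1$ in those indexed by the second.

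Finally, I invoke the standard fact recalled immediately before Proposition~\ref{p:binaryCircuits}, namely that the signed circuits of the oriented matroid of the vertex-arc incidence matrix of a directed graph are precisely the signed circuits of the graph. This completes the proof, once one checks that the column labeling of $\cala_{\calc,\bfd}$ matches the edge labeling of $G^{m,n}_\bfd$ by concatenation of the two partite-vertex tuples. The main obstacle is not conceptual but a matter of careful bookkeeping: one must verify that the two layers of reductions respect the labeling so that the resulting bijection between kernel vectors and integer circulations on $G^{m,n}_\bfd$ is the same one used to state the result.
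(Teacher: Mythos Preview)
The paper does not actually supply a proof of this proposition; it is stated as ``the obvious extension of Proposition~\ref{p:binaryCircuits}'' and left at that.  Your argument is correct and makes this extension precise.

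The most literal extension of the paper's binary argument would be to work directly with $\cala_{\Delta_m \sqcup \Delta_n,\bfd}$: its rows are indexed by pairs $(F,\bfj)$ with $F$ a face of one of the two simplices and $\bfj \in \bfd_F$, and an inclusion--exclusion row operation (M\"obius inversion on each simplex) together with sign changes turns this matrix into the vertex--arc incidence matrix of $G^{m,n}_\bfd$, with one redundant vertex row omitted.  You instead route through Observation~\ref{o:changeComplex} to collapse each facet to a single vertex, reducing to $\cala_{\Delta_0 \sqcup \Delta_0,(D_1,D_2)}$, where the row operations needed are transparent.  This buys you a cleaner matrix to manipulate at the cost of invoking the observation and tracking a column bijection; the direct route avoids the bijection bookkeeping but requires the M\"obius-style row operations in full generality.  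Either way the substance is the same: both matrices share a real kernel with the incidence matrix of the same complete bipartite digraph, so their column oriented matroids coincide.  One small phrasing point: the oriented matroid on the columns is determined by the \emph{real} kernel; your appeal to equality of \emph{integer} kernels is fine because for integer matrices the integer kernel spans the real kernel, but it is worth saying so explicitly.
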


\begin{figure}
\begin{center}
	$\begin{array}{c@{\qquad}c}
		\begin{array}{cc|cccccccc|}
		    &\empty &\triple{1}{1}{1}& \triple{1}{1}{2}& \triple{1}{2}{1}& \triple{1}{2}{2}
		    & \triple{2}{1}{1}& \triple{2}{1}{2}& \triple{2}{2}{1}& \triple{2}{2}{2}\\
		    \hline
		    &\emptyset &1&1&1&1&1&1&1&1\\
		    &\{1\} &1&1&1&1&0&0&0&0\\
		    &\{2\} &1&1&0&0&1&1&0&0\\
		    &\{3\} &1&0&1&0&1&0&1&0\\
		    &\{1,2\} &1&1&0&0&0&0&0&0\\
		    \hline
		    \\[-0.8em]
		\end{array}
		&
		\begin{array}{c@{}c|cccccccc|}
		    \\
		    \\
		    &\empty &\triple{1}{1}{1}& \triple{1}{1}{2}& \triple{1}{2}{1}& \triple{1}{2}{2}
		    &\triple{2}{1}{1}& \triple{2}{1}{2}& \triple{2}{2}{1}& \triple{2}{2}{2}\\
		    \hline
		    &\emptyset &1&1&1&1&1&1&1&1\\
		    &\{1\} &1&1&1&1&0&0&0&0\\
		    &\{2\} &1&1&0&0&1&1&0&0\\
		    \hline
		    \\[-0.8em]
		\end{array}
		\\
		\cala_{\Delta_{1} \sqcup \Delta_0,{\bf 2}}
		&
		\cala_{D_{1,0},{\bf 2}}
	\end{array}$
\end{center}
\vspace{0.05in}
\begin{center}
	\begin{tikzpicture}[x=1.3cm, y=1cm,
		every edge/.style={draw,postaction={decorate,decoration={markings,mark=at position 0.5 with {\arrow{>}}}}}]
		\node  at (-1,0){$G_1^{1,0}=$};
		\vertex (0) at (0,0)[label=left:$1$]{};
		\vertex (1) at (2,0)[label=right:$2$]{};
		\vertex (00) at (1,-2)[label=above:$11$]{};
		\vertex (01) at (1,-1)[label=above:$12$]{};
		\vertex (10) at (1,0)[label=above:$21$]{};
		\vertex (11) at (1,1)[label=above:$22$]{};
		\path
			(0) edge (00)
			(1) edge (00)
			(0) edge (01)
			(1) edge (01)
			(0) edge (10)
			(1) edge (10)
			(0) edge (11)
			(1) edge (11)
		;
	\end{tikzpicture}
	\qquad\qquad
	\begin{tikzpicture}[x=1.3cm, y=1cm,
		every edge/.style={draw,postaction={decorate,decoration={markings,mark=at position 0.5 with {\arrow{>}}}}}]
		\node  at (-1,0){$G_2^{1,0}=$};
		\vertex (0) at (0,0)[label=left:$1$]{};
		\vertex (1) at (2,0)[label=right:$2$]{};
		\vertex (00) at (1,-2)[label=above:$11$]{};
		\vertex (01) at (1,-1)[label=above:$12$]{};
		\vertex (10) at (1,0)[label=above:$21$]{};
		\vertex (11) at (1,1)[label=above:$22$]{};
		\path
			(0) edge (00)
			(1) edge (01)
			(1) edge (10)
			(0) edge (11)
			(00) edge (1)
			(01) edge (0)
			(10) edge (0)
			(11) edge (1)
		;
	\end{tikzpicture}
\end{center}
\caption{The matrices $\cala_{\Delta_{1} \sqcup \Delta_0,{\bf 2}}$ and $\cala_{D_{1,0},{\bf 2}}$, along with their respective graphs $G_1^{1,0}$ and $G_2^{1,0}$, from Example~\ref{e:binaryCircuits}.}
\label{f:binaryCircuits}
\end{figure}

Having now characterized the Graver basis of each nucleus, 
it remains to describe how each of the operations 
$\cone(-)$, $G$, $\Lambda_2$ and $\Lambda_3$ 
affect the Graver basis of a unimodular matrix $\cala_{\calc,\bfd}$.
Adding ghost vertices changes $\cala_{\calc,\bfd}$ by
$\cala_{G \calc,(\bfd \ \ q)} = G_q(\cala_{\calc,\bfd})$.
Therefore,
every element in the Graver basis of $\cala_{G\calc,(\bfd \ \ q)}$
is either of the form
\[
	(0 \ \ \cdots \ \ 0 \ \ e_i \ \ 0 \ \ \cdots \ \ 0 \ \ -e_i \ \ 0 \ \ \cdots \ \ 0)
\]
where $e_i$ is the $i^{\rm th}$ standard basis vector,
or $\begin{pmatrix} u_1 & u_2 & \dots & u_q \end{pmatrix}$
where $u_1 + u_2 + \dots + u_q$ is a conformal sum which lies in the Graver basis
of $\cala_{\calc,\bfd}$.
Keeping in mind the representation of $\cala_{\calc,\bfd}$
given in \cite{unimodularbinaryhierarchical},
it is easy to see that the kernel of $\cala_{\cone(\calc),(\bfd \ \ q)}$
is the same as the kernel of the following matrix
\[
	\begin{pmatrix}
		\cala_{\calc,\bfd} & 0 &  \dots & 0 \\
		0 & \cala_{\calc,\bfd} & \dots & 0 \\
		\vdots & \vdots & \ddots & \vdots \\
		0 & 0 & \dots & \cala_{\calc,\bfd}
	\end{pmatrix}.
\]
The Graver basis of $\cala_{\cone(\calc),(\bfd, q)}$ consists of 
elements of the form $(0 \ \ \cdots \ \ 0 \ \ u \ \ 0 \ \ \cdots \ \ 0)$
where $u$ is in the Graver basis of $\cala_{\calc,\bfd}$.
Again, considering the representation of $\cala_{\calc,\bfd}$
given in \cite{unimodularbinaryhierarchical},
the kernel of $\cala_{\Lambda(\calc),(\bfd \ \ p)}$
is the same as the kernel of $\Lambda_p\cala_{\calc,\bfd}$.
From this it easily follows that the Graver basis of $\cala_{\Lambda(\calc),(\bfd \ \ 2)}$
consists of elements of the form $(u \ \ -u)$ where $u$ is in the graver basis
of $\cala_{\calc,\bfd}$. 

This leaves the operation $\Lambda_3$.
Theorem~\ref{t:completeunimodular} tells us that
we only need to consider $\Lambda_3$ when applied to
$\cala_{\calc,\bfd}$ where $\calc$ is nuclear with nucleus $\Delta_m \sqcup \Delta_0$
and $\bfd_v = 2$ for the vertex of the $\Delta_0$ and every Lawrence vertex.
The operation $\cone(-)$ commutes with each other operation, and $\Lambda_2$ commutes with~$\Lambda_3$, so it suffices to describe the Graver basis of complexes of the form
\[
	\calc = \Lambda_3 G \Lambda^{k_1} G_{q_1} 
        \Lambda_2^{k_2} G_{q_2} \cdots \Lambda_2^{k_l} G_{q_l} \Lambda_2^{k_{l+1}}(\Delta_m \sqcup \Delta_0).
\]
Using Proposition~\ref{p:lawrenceghostgraver}, a Graver basis 
for the matrix corresponding to $\calc$ can be obtained inductively, 
starting with the Graver basis for the matrix corresponding to the 
labeled complex $\Lambda_3 (\Delta_m \sqcup \Delta_0)$, whose 
matrix is the same as for the HM pair $(\{12,13,23\}, (3,2,p))$.  

\begin{cor}\label{c:allgraverbases}
    The Graver basis of any unimodular HM pair can be obtained 
    by way of Propositions~\ref{p:binaryCircuits} and \ref{p:nonbinaryCircuits} and the discussion thereafter.  
\end{cor}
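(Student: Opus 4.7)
The plan is to proceed by induction on the construction of the nuclear complex, using Theorem~\ref{t:completeunimodular}\eqref{item:pairConstructive} as the organizing framework. By that theorem, every unimodular HM pair $(\calc,\bfd)$ is built from one of the three allowed nuclei ($\Delta_m$, $\Delta_m \sqcup \Delta_n$ with appropriate vertex labels, or $D_{m,n}$ with $\bfd = {\bf 2}$) by a sequence of operations drawn from $\{\cone(-),\, G,\, \Lambda_2\}$, with at most one application of $\Lambda_3$ permitted in the exceptional second case, where the nucleus necessarily has the form $\Delta_m \sqcup \Delta_0$ with the vertex of $\Delta_0$ labeled $2$. The base case of the induction is handled directly by Propositions~\ref{p:binaryCircuits} and~\ref{p:nonbinaryCircuits}, which give an explicit graph-theoretic description of the Graver basis of $\cala_{\calc,\bfd}$ for each allowable nucleus.

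For the inductive step, one invokes the explicit recipes spelled out in the discussion following Proposition~\ref{p:nonbinaryCircuits}. The operation $\cone(-)$ block-diagonalizes the matrix, so the new Graver basis is the disjoint union of copies of the old one, one supported in each block. The operation $G_q$ enlarges the Graver basis by adding swap vectors of the form $(0,\ldots,e_i,\ldots,-e_i,\ldots,0)$ across pairs of blocks, together with all conformal ``spreadings'' of old Graver basis vectors across the $q$ copies; this follows from $\cala_{G\calc,(\bfd\ q)} = G_q \cala_{\calc,\bfd}$ and unimodularity. The operation $\Lambda_2$ doubles each Graver basis vector $u$ into $(u,-u)$, by the standard description of Lawrence lifts of unimodular matrices as in~\cite[Theorem~7.1]{sturmfels}. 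Each of these descriptions is immediate once the kernel identifications in the preceding paragraphs are in hand.

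The main obstacle is the $\Lambda_3$ case, which is confined to the exceptional subcase of Theorem~\ref{t:completeunimodular}\eqref{item:pairConstructive}(2). In this subcase, the HM pair takes the form $\Lambda_3 G \Lambda_2^{k_1} G_{q_1}\cdots \Lambda_2^{k_{l+1}}(\Delta_m \sqcup \Delta_0)$. Since $\cone$ commutes with every other operation and $\Lambda_2$ commutes with $\Lambda_3$ at the kernel level, one may reorganize the construction so that $\Lambda_3$ is applied directly to $\cala_{\Delta_m \sqcup \Delta_0, \bfd'}$; by Observation~\ref{o:changeComplex}, the resulting matrix has the same integer kernel as that of the HM pair $(\{12,13,23\},(3,2,p))$, whose Graver basis serves as the seed of the inductive procedure. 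Each subsequent $G_q$ applied inside the $\Lambda_3$ lift is then handled by Proposition~\ref{p:lawrenceghostgraver}, which explicitly describes the Graver basis of $\Lambda_3 G_q A$ in terms of that of $\Lambda_3 A$ via the three types (i)--(iii). Iterating this, interleaved with the outer $\cone$, $G$, and $\Lambda_2$ steps, yields a complete combinatorial description of the Graver basis of $\cala_{\calc,\bfd}$ in every case permitted by Theorem~\ref{t:completeunimodular}.
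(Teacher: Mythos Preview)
Your argument is the same as the paper's: the corollary is merely a summary of the preceding discussion, and you have reproduced that discussion---Propositions~\ref{p:binaryCircuits} and~\ref{p:nonbinaryCircuits} for the nuclei, the explicit recipes for $\cone$, $G_q$, $\Lambda_2$, and Proposition~\ref{p:lawrenceghostgraver} for $\Lambda_3$---faithfully.

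One sentence in your $\Lambda_3$ paragraph is misstated, though. You claim that commutativity of $\Lambda_2$ with $\Lambda_3$ lets you ``reorganize the construction so that $\Lambda_3$ is applied directly to $\cala_{\Delta_m\sqcup\Delta_0,\bfd'}$.'' That is false: $G$ does not commute with $\Lambda_3$, so the ghost operations cannot be pushed outside the lift. What commutativity actually buys is only that $\Lambda_3$ may be taken as the outermost operation (which you had already written) and that each internal $\Lambda_2$ can be handled via $\Lambda_3\Lambda_2 B=\Lambda_2\Lambda_3 B$. The complex $\Lambda_3(\Delta_m\sqcup\Delta_0)$ is not obtained by rearranging the whole construction; it is the base case of the inner induction in which Proposition~\ref{p:lawrenceghostgraver} reinserts each ghost vertex. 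Your next sentence shows you understand this, so the issue is one of wording rather than substance.
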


\section*{Acknowledgments}
This work began at the 2016 AMS Mathematics Research Communities in Algebraic Statistics
which was supported by the National Science Foundation under Grant Number DMS 1321794.
Daniel Irving Bernstein was partially supported by the US National Science
Foundation (DMS 0954865) and the David and Lucille Packard Foundation.

{\footnotesize
\bibliography{normal}}
\bibliographystyle{plain}

\end{document}